\long\def\symbolfootnote[#1]#2{\begingroup%
\def\thefootnote{\fnsymbol{footnote}}\footnote[#1]{#2}\endgroup}
\def\R{\mathbb{R}}
\def\prob#1{\textrm{Pr}(#1)}
\date{}
\title{A concentration result with application to subgraph count}
\author{{Guy Wolfovitz\thanks{Department of Computer Science, 
Haifa University, Haifa, Israel. Email address:
{\tt gwolfovi@cs.haifa.ac.il}.}}}
\newtheorem{theorem}{Theorem}[section]
\newtheorem{lemma}[theorem]{Lemma}
\newtheorem{definition}{Definition}
\newtheorem{proposition}[theorem]{Proposition}
\newtheorem{problem}[theorem]{Problem}
\renewcommand{\epsilon}{\varepsilon}
\DeclareMathOperator{\var}{Var}
\DeclareMathOperator\expec{{\mathbb E}}
\DeclareMathOperator\codeg{{codeg}}
\newtheoremstyle{upright}%
        {8pt plus2pt minus4pt}%
        {8pt plus2pt minus4pt}%
        {\upshape}%
        {}%
        {\bfseries}%
        {:}%
        {1em}%
        {}%
\theoremstyle{upright}
\newcommand{\ignore}[1]{}
\begin{document}

\maketitle

\begin{abstract}
Let $H = (V,E)$ be a $k$-uniform hypergraph with a vertex set $V$ and an edge
set $E$.  Let $V_p$ be constructed by taking every vertex in $V$ independently
with probability~$p$.  Let $X$ be the number of edges in $E$ that are contained
in $V_p$.  We give a condition that guarantees the concentration of $X$ within
a small interval around its mean.  The applicability of this result is
demonstrated by deriving new sub-Gaussian tails for the number of copies of
small complete and complete bipartite graphs in the binomial random graph,
extending results of Ruci\'nski and Vu.
\end{abstract}

\section{Introduction} \label{sec:1}
%
Let $H = (V, E)$ be a hypergraph, where $V$ is a set of $n$ vertices and $E
\subseteq 2^V$ is a set of $m$ edges.  Assume that $H$ is $k$-uniform, that is
every edge in $E$ contains exactly $k$ vertices.  Let $0 < p <  1$ and let
$V_p$ be a random set of vertices constructed by taking every vertex in $V$
independently with probability $p$.  Let $H_p = (V_p, E_p)$ be the hypergraph
with vertex set $V_p$ and edge set $E_p$, where $e \in E_p$ if and only if $e
\in E$ and $e \subseteq V_p$.  Let $X := |E_p|$ count the number of edges of
$H_p$.  The main aim of this paper is to provide a condition which guarantees
the concentration of $X$ within a small interval around its mean. 

Before presenting the main result, let us give our motivation for studying the
random variable $X$.  Let $G$ be a fixed graph with $v_G$ vertices and $e_G$
edges.  Let $K_N$ denote the complete $N$-vertex graph.  Let $H_G$ be the
$e_G$-uniform hypergraph with a vertex set consisting of all edges in $K_N$ and
with an edge set consisting of all copies of $G$ in $K_N$. If we take $H = H_G$
and then let $X_G = X$, then $X_G$ counts the number of copies of $G$ in the
binomial random graph $G(N,p)$ (that is, the graph that is constructed by
taking every edge in $K_N$ independently with probability $p$).  The study of
$X_G$ is a classical topic in the theory of random graphs~(see
e.g.,~\cites{B85, JLR}). Here we are interested in the following problem which
was studied by Vu~\cites{Vu01,Vu02} and later also mentioned by
Kannan~\cite{K90}.
\begin{problem} \label{problem}
 Determine for which $p$ and $\lambda$ does $X_{G}$ have the sub-Gaussian tails
\begin{eqnarray} 
\label{eq:q} \prob{|X_G - \expec(X_G)| \ge \lambda \var(X_G)^{1/2}} \le e^{-c_G
\lambda^2}, 
\end{eqnarray}
where $c_G$ is a positive constant that depends only on $G$.
\end{problem}
Define $\rho_1 = \rho_1(G) := v_G/e_G$ and $\rho_2= \rho_2(G)  :=
(v_G-2)/(e_G-1)$.  Using our main result we prove the following.
\begin{theorem}  \label{thm:sub}
If $G$ is a complete  or a complete bipartite graph with $e_G \ge 3$, then for
every positive constant $c_1$ there is a positive constant $c_2 = c_2(G, c_1)$
such that~(\ref{eq:q}) holds provided $N^{-\rho_1 + c_1} \le p \le N^{-\rho_2 -
c_1}$ and $8 \ln N \le \lambda \le N^{c_2}$.
\end{theorem}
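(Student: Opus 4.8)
The plan is to apply the paper's main concentration result to the hypergraph $H = H_G$, for which $X = X_G$. Here the vertex set of $H$ is the edge set of $K_N$, so $H$ is $e_G$-uniform on $n = \binom{N}{2}$ vertices, its hyperedges are the copies of $G$ (hence $m = \Theta(N^{v_G})$), and $\expec(X_G) = m p^{e_G} = \Theta(N^{v_G} p^{e_G})$ (up to the factor $1/|\aut(G)|$). Under the stated window the lower bound $p \ge N^{-\rho_1 + c_1}$ forces $\expec(X_G) \ge N^{e_G c_1}$, so the mean is polynomially large; this is what makes a sub-Gaussian tail up to $\lambda \le N^{c_2}$ meaningful.

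The heart of the argument is to verify the hypothesis of the main result, which I expect to be a bound on the expected codegrees of $H_G$, that is, on the expected number of hyperedges extending a fixed set of edges of $K_N$. For a set $J$ of $j$ edges of $K_N$ spanning a subgraph $F \subseteq G$ with $v_F$ vertices and $e_F = j$ edges, the number of copies of $G$ containing $J$ is $\Theta(N^{v_G - v_F})$, so the expected number of hyperedges of $H_G$ extending $J$ inside $V_p$ is $\Theta\!\left(N^{v_G - v_F} p^{e_G - j}\right)$, i.e. $\codeg(J)\,p^{e_G - j}$. The condition to check is that each such quantity lies a polynomial factor below $\expec(X_G)$, uniformly over $j \ge 1$ and over the worst-case $F$; equivalently, that $N^{v_F} p^{e_F} \ge N^{\delta}$ for every subgraph $F \subseteq G$ with at least one edge, for some $\delta = \delta(c_1) > 0$.

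I would then carry out the resulting density optimization. Writing $p = N^{-\alpha}$, the quantity $N^{v_F} p^{e_F} = N^{v_F - \alpha e_F}$ is governed by $\max_F e_F/v_F$ and by the one-edge extension density, and this is exactly where $\rho_1$ and $\rho_2$ enter: $\rho_1 = v_G/e_G$ is the reciprocal density of $G$, and $\rho_2 = (v_G-2)/(e_G-1)$ is the reciprocal of the density of extending one edge to a copy of $G$. The lower bound $p \ge N^{-\rho_1 + c_1}$ keeps every $N^{v_F} p^{e_F}$ polynomially large (supercriticality of all subgraphs, via the balancedness of $G$), while the upper bound $p \le N^{-\rho_2 - c_1}$ forces $N^{v_G - 2} p^{e_G - 1} \le N^{-c_1(e_G - 1)} = o(1)$, so a typical edge lies in $o(1)$ copies of $G$; this is the regime in which the copies overlap weakly on edges and $X_G$ fluctuates like a sum of nearly independent indicators. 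The constant $c_2$ is then chosen small relative to the gap $\delta$ so that the Gaussian term dominates the tail for all $\lambda \le N^{c_2}$, while the assumption $\lambda \ge 8\ln N$ absorbs the lower-order and boundary corrections in the concentration inequality.

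The main obstacle is the \emph{uniform} codegree verification: one must bound the weighted codegree polynomially below the mean for every $j$ and every worst-case $j$-edge configuration, simultaneously across the whole $p$-window. This reduces to a finite optimization of $v_F - \alpha e_F$ over the lattice of subgraphs $F \subseteq G$, and the crucial input is the balancedness of complete and complete bipartite graphs, which guarantees that the binding constraints are exactly the two exposed by $\rho_1$ and $\rho_2$, rather than some intermediate subgraph. This structural fact is what restricts the theorem to these two families.
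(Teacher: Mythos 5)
The high-level skeleton is right: take $H = H_G$, verify the hypotheses of Theorem~\ref{thm:main}, and then convert the resulting deviation bound on the scale $\expec(X_G)^{1/2}$ into one on the scale $\lambda\var(X_G)^{1/2}$ using the fact that in this range of $p$ the copies of $G$ overlap weakly, so $\var(X_G) = \Theta(\expec(X_G))$. But you have misidentified what the hypotheses of the main theorem actually are, and that is where essentially all of the work lies. The condition you propose to verify --- that for every subgraph $F \subseteq G$ with $e_F \ge 1$ the weighted codegree $N^{v_G - v_F} p^{e_G - e_F}$ sits polynomially below $\expec(X_G)$ --- is the hypothesis of the Kim--Vu / Vu polynomial concentration inequalities, not of Theorem~\ref{thm:main}. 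An expectation-level optimization over the subgraph lattice cannot suffice here: Vu's theorem, whose hypothesis is exactly the one you describe, is already known to cover only $p \ge N^{-\rho_2 + c_1 + c_3}$, i.e.\ the range complementary to the one this theorem is designed to handle.

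What Theorem~\ref{thm:main} actually requires (property~(P4) of Definition~\ref{def:nice}) is a \emph{high-probability} statement about the random hypergraph $H_q$ for every $q$ with $p \le q < 1$: with probability at least $1 - e^{-b\lambda^2}$, every degree of $H_q$ is at most $\max\{2q^{k-1}\Delta, \Gamma\}$ and, under a density side condition, every codegree of $H_q$ is at most $\deg_q(w)\ln^{-3}n$. Concretely this means proving upper-tail concentration, with failure probability $e^{-b\lambda^2}$ and uniformly in $q$, for the number of copies of $G$ in $G(N,q)$ containing a fixed edge (and containing a fixed edge plus a fixed extra vertex). The paper does this by introducing the rooted graphs $(R_1, G)$ and $(R_2, G)$, rooted at an edge and at an edge plus a third vertex respectively, checking that these are balanced in the \emph{rooted} sense (density $t/s$ with root-induced edges excluded), and invoking Vu's extension-count concentration (Theorem~\ref{thm:kv}, from Corollary~6.7 of Vu), together with a monotonicity argument to handle small $q$ where the expected extension count drops below $\Gamma$; this is also where the cap $\lambda \le N^{c_2}$ with $c_2 = 0.1c_1/(e_G+c_1)$ comes from. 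None of this appears in your proposal, and a deterministic subgraph-density computation does not produce the required probability-$(1 - e^{-b\lambda^2})$ degree and codegree bounds. Relatedly, the balancedness that matters is that of these rooted graphs rather than of $G$ itself, and it is this rooted balancedness (plus the explicit comparison of $\expec(Z_2)$ to $\expec(Z_1)$) that restricts the argument to complete and complete bipartite graphs.
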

Suppose that $G$ is a complete  or a complete bipartite graph.  A result of
Vu~\cite{Vu02} implies that for every positive constant $c_1$ there are
positive constants $c_3 = c_3(G)$ and $c_4 = c_4(G, c_1)$ such
that~(\ref{eq:q}) holds provided $p \ge N^{-\rho_2 + c_1 + c_3}$ and $c_4^{-1}
\le \lambda \le N^{c_4}$.  Furthermore, when $G$ is a complete graph, one can
take $c_3 = 0$.  In addition, a result of Ruci\'nski~\cite{Ru88} implies
that~(\ref{eq:q}) holds provided $1/2 \ge p = \omega(N^{-\rho_1})$ and
$\lambda$ is constant.  Observe that Theorem~\ref{thm:sub} extends both of
these results. In particular, when $G$ is a complete graph
Theorem~\ref{thm:sub} in a sense complements Vu's result, as for every positive
constant $c_1$ the former handles the case $N^{-\rho_1 + c_1} \le p \le
N^{-\rho_2 - c_1}$ while the latter handles the case $p \ge N^{-\rho_2 + c_1}$.

\subsection{Main result}
In order to state our main result, we need some definitions and notation.  The
degree of a vertex $v$ of a given hypergraph is the number of edges of the
hypergraph that contain $v$.  The co-degree of two distinct vertices $u, v$ of
a given hypergraph is the number of edges of the hypergraph that contain both
$u$ and $v$.  Denote by $\deg_p(v)$ the degree of a vertex $v$ of $H_p$.
Denote by $\codeg_p(u,v)$ the co-degree of two distinct vertices $u, v$ of
$H_p$.  Denote by $\Delta$ (resp. $\delta$) the maximum (resp. minimum) degree
of a vertex of $H$.  Denote by $\Delta_2$ the maximum co-degree of two distinct
vertices of $H$.
The following definition provides the condition which will be shown to imply
the concentration of $X$.
\begin{definition} \label{def:nice} Say that $(H,p,\lambda,\Gamma, b)$ is
\emph{nice} if the following properties hold.
\begin{itemize}
\item[(P1)] $p \le 10^{-3}$, $k \ge 3$ is constant and $n \ge n_0$ for a
sufficiently large constant $n_0 = n_0(k)$; 
\item[(P2)] $(p^k m)^{1/2} \ge \max\{ \ln n, \lambda\}$;
\item[(P3)] $\Delta_2 \le \delta \ln^{-3} n$;
\item[(P4)] Let $p \le q < 1$. With probability at least $1 - e^{-b \lambda^2}$
we have:
\begin{itemize} 
\item $\forall v \in V_q: \deg_q(v) \le \max  \{2 q^{k-1} \Delta, \Gamma\}$; 
\item $p^{1/2} q^{k-3/2} \Delta^2 n \ln n \ge m \implies \forall u, v, w \in
V_q: \codeg_q(u,v) \le \deg_q(w) \ln^{-3}n$. 
\end{itemize}
\end{itemize}
\end{definition}
Let us briefly discuss the condition that $(H, p, \lambda, \Gamma, b)$ is nice.
Property~(P1) is clear. Property~(P2) is equivalent to saying that the
expectation of $X$ is sufficiently large -- that it is lower bounded by
$\ln^2n$ and $\lambda^2$. Property~(P3) says that the maximum co-degree of $H$
is sufficiently small with respect to the minimum degree of $H$, and
property~(P4) says that with a sufficiently large probability this also holds
for $H_q$, provided $q$ is sufficiently large. Lastly, property~(P4) also says
that with a sufficiently large probability the maximum degree of $H_q$ behaves
roughly as we expect it to.

%
\begin{theorem}[Main result]  \label{thm:main}
If $(H, p, \lambda, \Gamma, b)$ is nice then $$\prob{|X - \expec(X)| > (\ln n +
\lambda) \expec(X)^{1/2}} \le 2 (\gamma_1 + \gamma_2) \ln n,$$ where, for some
positive constant $b_k$ that depends only on $k$,
\begin{eqnarray*}
\gamma_1 &=& e^{-b \lambda^2} + 2 e^{-b_k \lambda^2} + 2 \exp \bigg(- \frac{b_k
m} {p^{k-1} \Delta^2 n \ln n}\bigg); \\ 
\gamma_2 &=& 2\exp\bigg(-\frac{ b_k pn } { \ln^5 n } \bigg) +
2\exp\bigg(-\frac{ b_k p^k m } { \Gamma^2 \ln^6 n } \bigg).
\end{eqnarray*}
\end{theorem}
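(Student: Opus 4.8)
The plan is to prove Theorem~\ref{thm:main} by a vertex-exposure Doob martingale combined with a Freedman/Bernstein-type tail inequality, where the increment and quadratic-variation bounds are enforced only on a high-probability ``good'' event supplied by niceness. Fix an ordering $v_1,\dots,v_n$ of $V$ and let $Z_i=\oneb{v_i\in V_p}$, with filtration $\mathcal{F}_i=\sigma(Z_1,\dots,Z_i)$. Since $X=\sum_{e\in E}\prod_{v\in e}Z_v$ is multilinear, the Doob martingale $X_i\deq\expec(X\mid\mathcal{F}_i)$ has increments of the product form $X_i-X_{i-1}=(Z_i-p)D_i$, where $D_i\deq\sum_{e\ni v_i}\big(\prod_{u\in e,\ u\ne v_i,\ \mathrm{idx}(u)<i}Z_u\big)\,p^{|\{u\in e:\ \mathrm{idx}(u)>i\}|}$ is the conditional \emph{live degree} of $v_i$: each edge through $v_i$ is counted only if its already-revealed vertices are present, with each as-yet-unrevealed vertex contributing a factor $p$. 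Hence $|X_i-X_{i-1}|\le D_i$ and $\expec\big((X_i-X_{i-1})^2\mid\mathcal{F}_{i-1}\big)=p(1-p)D_i^2\le p\,D_i^2$, so everything reduces to controlling $\max_i D_i$ and the predictable quadratic variation $W\deq p\sum_i D_i^2$.

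First I would establish increment control. The point is that $D_i$ is distributed like a degree in a random subhypergraph $H_q$, where the effective density $q\in[p,1)$ grows as the exposure proceeds (the revealed-present vertices behave like a $p$-random set and the unrevealed ones contribute $p$-weights). Property~(P4) bounds $\deg_q(v)\le\max\{2q^{k-1}\Delta,\Gamma\}$ with probability $1-e^{-b\lambda^2}$; applying it across the $O(\ln n)$ dyadic scales of $q$ met during exposure, and union-bounding, simultaneously caps every increment $D_i$ with high probability. This union over $O(\ln n)$ scales is exactly what produces the factor $\ln n$ in the final bound, and the failure probabilities feed the $e^{-b\lambda^2}$ term of $\gamma_1$ together with the $\gamma_2$ terms $2\exp(-b_kpn/\ln^5n)$ and $2\exp(-b_kp^km/(\Gamma^2\ln^6n))$, which bound the deviations of $|V_q|$ and of the $\Gamma$-governed degrees from their typical values. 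Property~(P2) guarantees $\expec(X)=p^km\ge\max\{\ln^2 n,\lambda^2\}$, so the target radius $(\ln n+\lambda)\expec(X)^{1/2}$ is meaningful and dominates the error we must absorb.

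The crux is the quadratic-variation bound. Writing $D_i^2=\sum_{e\ni v_i}w_e^2+\sum_{e\ne e'\ni v_i}w_ew_{e'}$ with $w_e\in[0,1]$, the diagonal part satisfies $\sum_{e\ni v_i}w_e^2\le D_i$, and since $\sum_iD_i$ concentrates around $kp^{k-1}m$, the diagonal contribution to $W$ is of order $p\cdot(\max_iD_i)\cdot p^{k-1}m\asymp\expec(X)\cdot p^{k-1}\Delta$ up to the increment cap. The off-diagonal part counts pairs of \emph{live} edges meeting at $v_i$, and this is precisely where the codegree hypotheses (P3), and their $H_q$-analogue in (P4), are indispensable: $\codeg\le\delta\ln^{-3}n$, respectively $\codeg_q(u,v)\le\deg_q(w)\ln^{-3}n$, force the number of such pairs sharing an additional vertex to be a $\ln^{-3}n$ fraction of the diagonal, so the off-diagonal sum cannot inflate $W$ beyond $\expec(X)$ up to polylogarithmic factors. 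The conditional form of the $H_q$-codegree statement in (P4) dictates a dichotomy according to whether $p^{1/2}q^{k-3/2}\Delta^2n\ln n\gtrsim m$: in the dense case the codegree bound is available directly, while in the sparse case the degree concentration alone suffices, and the residual error of this split is recorded by the term $2\exp\!\big(-b_km/(p^{k-1}\Delta^2n\ln n)\big)$ of $\gamma_1$. Feeding $W=O(\expec(X)\,\mathrm{polylog}\,n)$ and the increment cap into Freedman's inequality then yields the main concentration term $2e^{-b_k\lambda^2}$ with deviation radius $(\ln n+\lambda)\expec(X)^{1/2}$.

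I expect the quadratic-variation estimate to be the main obstacle. The genuine difficulty is that $\var(X)$ itself can be \emph{much larger} than $\expec(X)$ (it is typically dominated by the vertex-sharing, i.e.\ codegree, pairs counted by the $j=1$ term of $\sum_{e,e'}\Cov(\one_e,\one_{e'})$), yet Theorem~\ref{thm:main} asserts concentration on the finer scale $\expec(X)^{1/2}$. The resolution is that this excess variance is carried by a small number of high-influence coordinates, which niceness pins to their typical behaviour; conditioned on the nice event the live degrees and codegrees are tame, so the \emph{conditional} fluctuations are truly of order $\expec(X)^{1/2}$. Correctly importing this ``typical (rather than worst-case) boundedness'' into a martingale inequality---replacing uniform increment and variation bounds by ones valid only on the good event while paying only its failure probability---together with the careful bookkeeping of the $O(\ln n)$ exposure scales, is where the real work lies.
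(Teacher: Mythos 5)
Your architecture (a single-pass vertex-exposure Doob martingale with Freedman's inequality, with increment and quadratic-variation bounds enforced on a good event) is genuinely different from the paper's, which instead re-randomizes $V_p$ as $I\le\ln n$ successive uniform $\epsilon$-thinnings $V_0\supseteq V_1\supseteq\dots\supseteq V_I$ with $\epsilon^I=p$, and applies plain McDiarmid once per round to $X_{i+1}$ and once to $\sum_v\deg_{i+1}(v)^2$, propagating the needed bounds inductively (this is Lemma~\ref{lemma}; the factor $\ln n$ is the union bound over rounds). The difference is not cosmetic: the paper's scheme guarantees that at every stage the conditioning object is a genuine $H_{\epsilon^i}$ with $\epsilon^i\in[p,1)$, so that property~(P4) applies verbatim. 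Your scheme does not have this feature, and that is where the proposal breaks. Your live degree $D_i$ is a weighted degree in the hypergraph induced on $(U_i\cap V_p)\cup(V\setminus U_i)$, where $U_i$ is the set of already-exposed vertices: exposed vertices are kept with probability $p$, unexposed ones with probability $1$. This is not $V_q$ for any $q$, and there is no single ``effective density'' per coordinate either, since the number of exposed vertices varies from edge to edge through $v_i$. Property~(P4) controls only $\deg_q(v)$ for uniformly thinned $H_q$, so the step ``applying it across the $O(\ln n)$ dyadic scales of $q$ met during exposure'' is not licensed by the hypotheses; what you actually need is control of partial/live degrees (edges through $v$ with at least $j$ of their other vertices present), which niceness does not supply. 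Monotonicity does not rescue this: the induced vertex set contains $V_p$ and is contained in $V$, so the only free bounds are $\deg_p(v_i)\le(\cdot)\le\Delta$, and $\Delta$ is far too large for the early and middle coordinates.

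The quadratic-variation step also contains an error as written. In the decomposition $D_i^2=\sum_{e\ni v_i}w_e^2+\sum_{e\ne e'}w_ew_{e'}$, the dominant off-diagonal contribution comes from pairs $e,e'$ sharing \emph{only} $v_i$ --- there are up to $\deg(v_i)^2$ of them and codegree hypotheses say nothing about them --- and it produces $W\approx p^{2k-1}\Delta^2 n$ (up to logs), which is \emph{not} $O(\expec(X)\,\mathrm{polylog}\,n)$ in general. So the conclusion ``Freedman then yields $2e^{-b_k\lambda^2}$'' is false as stated; in the regime where $p^{k-1}\Delta^2 n\gtrsim m$ the correct tail is $\exp(-\Theta(m/(p^{k-1}\Delta^2 n\ln n)))$, which is exactly why that term appears separately in $\gamma_1$ rather than being absorbed into $e^{-b_k\lambda^2}$. (The codegree conditions~(P3)/(P4) enter only to tame the pairs sharing an additional vertex, i.e.\ the variance of the live degrees themselves, which is the content of the paper's Proposition~\ref{claim:1}.) To salvage your route you would need (a) a lemma controlling live/partial degrees under non-uniform exposure, replacing the illegitimate use of~(P4), and (b) a concentration statement for $\sum_iD_i^2$ around $p^{2k-2}\sum_v\deg(v)^2$ plus a lower-order term --- precisely the quantity the paper tracks as precondition~(ii) and re-proves at each round. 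As it stands, the two load-bearing estimates are asserted rather than derivable from the stated hypotheses.
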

Here, a rather simple application of our main result is sketched. Suppose that
$H$ is the hypergraph $H_{G}$ that was defined above, with $G$ being a
triangle.  In that case, $X_G = X$ counts the number of triangles in $G(N, p)$.
We have $n = \binom{N}{2}$, $m = \binom{N}{3}$, $\Delta = \delta = N-2$,
$\Delta_2 = 1$ and $k=3$. Assume that $n \ge n_0$ for a sufficiently large
constant $n_0$. Let $p = N^{-1} \ln^{50}N$ and $\lambda = \ln^{10} N$.  It is
easy to show using Chernoff's bound~(see e.g.,~\cite{JLR}) that $(H, p, 0.25
\lambda, \lambda^2, b)$ is nice for some positive constant $b$.  Using this,
one can easily see that Theorem~\ref{thm:main} implies that the probability
that $X_G$ deviates from its expectation by more than $(\ln n + 0.25 \lambda)
\expec(X_G)^{1/2}$ is at most $e^{- c_G \lambda^2}$ for some positive constant
$c_G$.  Now, clearly $\ln n \le 0.25 \lambda$ and in addition, for our choice
of $p$ we have $0.5 \expec(X_G)^{1/2} < \var(X_G)^{1/2}$.  Thus we infer the
following sub-Gaussian behavior: the probability that $X_G$ deviates from its
expectation by at least $\lambda \var(X_G)^{1/2}$ is at most $e^{- c_G
\lambda^2}$.

We note that for some range of the parameters (e.g., in some cases where
$\Delta$ is not bounded and $\var(X)$ equals up to a constant to $\expec(X)$ --
as is implicitly the case in the example above and in the proof of
Theorem~\ref{thm:sub}), Theorem~\ref{thm:main} does not follow directly from
other known concentration results such as Azuma's inequality or Talagrand's
inequality~(see e.g.,~\cite{JR02}), Kim and Vu's inequalities~(see
e.g.,~\cites{Vu02}) or the more recent result of Kannan~\cite{K90}. 
In addition, we should note that a weaker version of Theorem~\ref{thm:main} has
been used implicitly by the author in~\cite{Guy}, in order to prove
Theorem~\ref{thm:sub} for the special case where $G$ is a triangle.
In fact, in that special case it turns out that better bounds for $p$ and
$\lambda$ can be given.

\subsection{A probabilistic tool}
The proof of Theorem~\ref{thm:main} is based on an iterative application of
McDiarmid's inequality~\cite{McD}, which we state now.  Let $\alpha_1, \alpha_2,
\ldots, \alpha_l$ be independent random variables with $\alpha_i$ taking values
in a set $A_i$. Let $f : \prod_{i=1}^l A_i \to \R$ satisfy the following
Lipschitz condition: if two vectors $\alpha, \alpha' \in \prod_{i=1}^l A_i$
differ only in the $i$th coordinate, then $|f(\alpha) - f(\alpha')| \le a_i$.
McDiarmid's inequality states that the random variable $W = f(\alpha_1,
\alpha_2,\ldots, \alpha_l)$ satisfies for any $t \ge 0$,
\begin{eqnarray*} 
\prob{|W - \expec(W)| \ge t} \le 2\exp\bigg(-\frac{2t^2}{\sum_{i=1}^l
a_i^2}\bigg).  
\end{eqnarray*}

\subsection{Structure of the paper}
%
In Section~\ref{sec:2} we state a technical lemma (Lemma~\ref{lemma}) and use
it to prove Theorem~\ref{thm:main}. That technical lemma is proved in
Section~\ref{sec:3}. Finally, in Section~\ref{sec:4} we use
Theorem~\ref{thm:main} to derive Theorem~\ref{thm:sub}.

\section{Proof of Theorem~\ref{thm:main}} \label{sec:2}
%
Assume that $(H, p, \lambda, \Gamma, b)$ is nice and note that if follows
from~(P1)~and~(P2) that $1/n \le p \le 10^{-3}$. 

Consider the following alternative, iterative definition of the random set
$V_p$.  Let $\epsilon \in [10^{-6}, 10^{-3}]$ and let $I \le \ln n$ be an
integer such that $\epsilon^I = p$.  Define a sequence of sets $(V_i)_{i=0}^I$
as follows.  Let $V_0 := V$. Given $V_i$, construct $V_{i+1}$ by taking every
vertex $v \in V_i$ independently with probability $\epsilon$.  End upon
obtaining $V_I$.  (Note that this definition does not introduce any ambiguity,
as we've defined $V_p$ in the introduction only for $0 < p < 1$.) Observe that
for every integer $0  <  i \le I$, $V_i$ has the same distribution as
$V_{\epsilon^i}$. In particular, since $\epsilon^I = p$, we have that $V_I$ has
the same distribution as $V_p$.

We need the following definitions, notation and lemma. Let $0 \le i \le I$ be
an integer.  Let $H_i = (V_i, E_i)$ be the hypergraph with vertex set $V_i$
and edge set $E_i$, where $e \in E_i$ if and only if $e \in E$ and $e \subseteq
V_i$.  Let $X_i := |E_i|$ be the number of edges of $H_i$ and note that $X_I =
X$.  For a vertex $v \in V_i$, let $\deg_i(v)$ be the degree of $v$ in $H_i$.
For a vertex $v \notin V_i$, let $\deg_i(v) := 0$.  For two distinct vertices
$u, v \in V_i$, let $\codeg_i(u,v)$ be the co-degree of $u$ and $v$ in $H_i$.
Lastly, let $x \pm y$ denote the interval $[x-y, x+y]$.
\begin{lemma} \label{lemma}
For every integer $0 \le i < I$ the following holds.  Assume that $(H, p,
\lambda, \Gamma, b)$ is nice, and in addition,
\begin{enumerate}
\item[(i)] $X_i \in \epsilon^{ki}m \pm \big( i(p^km)^{-1/2}\epsilon^{ki}m +
\lambda (\epsilon^{(k+1)i} m p^{-1})^{1/2}\big)$;
\item[(ii)] $\sum_{v \in V} \deg_i(v)^2 \le \epsilon^{(2k-1)i} \Delta^2 n ( 1 +
3i \ln^{-2} n ) + 6 k \epsilon^{(k+1/2)i} m p^{-1/2}$; 
\item[(iii)] $\forall v \in V_i: \deg_i(v) \le \max\{2 \epsilon^{(k-1)i}\Delta,
\Gamma\}$;
\item[(iv)] $p^{1/2} \epsilon^{(k-3/2)i} \Delta^2 n \ln n \ge m \implies
\forall u, v, w \in V_i: \codeg_i(u,v) \le \deg_i(w) \ln^{-3} n$.
\end{enumerate}
Then the following two items hold respectively with probabilities at least
$1-\gamma_1$ and $1-\gamma_2$, where $\gamma_1$ and $\gamma_2$ are as given in
the statement of Theorem~\ref{thm:main}:
\begin{itemize} 
\item $X_{i+1} \in \epsilon^{k(i+1)}m \pm \big( (i+1)(p^km)^{-1/2}
\epsilon^{k(i+1)}m + \lambda (\epsilon^{(k+1)(i+1)} m p^{-1})^{1/2}\big)$; 
\item $\sum_{v \in V} \deg_{i+1}(v)^2 \le \epsilon^{(2k-1)(i+1)} \Delta^2 n ( 1
+ 3(i+1) \ln^{-2} n ) + 6 k \epsilon^{(k+1/2)(i+1)} m p^{-1/2}$.  
\end{itemize} \end{lemma}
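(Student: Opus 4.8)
The plan is to treat the passage from $V_i$ to $V_{i+1}$ as a fresh round of independent vertex exposures: for each $v\in V_i$ let $\alpha_v$ be the indicator that $v$ is retained in $V_{i+1}$, so that both $X_{i+1}$ and $S_{i+1}\deq\sum_{v\in V}\deg_{i+1}(v)^2$ are functions of the independent family $(\alpha_v)_{v\in V_i}$, and then apply McDiarmid's inequality to each. Before doing so I would condition on the good event of property~(P4) taken at $q=\epsilon^{i+1}$ (admissible since $p=\epsilon^I\le\epsilon^{i+1}<1$); since $V_{\epsilon^{i+1}}$ has the same distribution as $V_{i+1}$, this event fails with probability at most $e^{-b\lambda^2}$ -- the first term of $\gamma_1$ -- and on it we have the degree bound $\deg_{i+1}(v)\le\max\{2\epsilon^{(k-1)(i+1)}\Delta,\Gamma\}$ for all $v$ together with $\codeg_{i+1}(u,v)\le\deg_{i+1}(w)\ln^{-3}n$ whenever the relevant density hypothesis holds. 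These level-$(i+1)$ bounds are what will control the Lipschitz constants below.

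For the first item I would condition on $V_i$ and note that each edge of $E_i$ survives into $E_{i+1}$ independently with probability $\epsilon^k$, so $\expec(X_{i+1}\mid V_i)=\epsilon^k X_i$; by hypothesis~(i) this lies in $\epsilon^{k(i+1)}m$ plus or minus $\epsilon^k$ times the half-width in~(i). Comparing with the target interval, the room left for fluctuation is $B_1\deq(p^km)^{-1/2}\epsilon^{k(i+1)}m$ coming from the increment $i\mapsto i+1$, and $B_2\deq\lambda(\epsilon^{(k+1)i}m/p)^{1/2}(\epsilon^{(k+1)/2}-\epsilon^k)$ coming from the $\lambda$-term (positive since $k\ge3$). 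Flipping a single $\alpha_v$ changes $X_{i+1}$ by at most $\deg_i(v)$, so McDiarmid applies with $\sum_v a_v^2\le\sum_v\deg_i(v)^2$, which by hypothesis~(ii) is at most $\sigma_1^2+\sigma_2^2$ where $\sigma_1^2=\epsilon^{(2k-1)i}\Delta^2n(1+3i\ln^{-2}n)$ and $\sigma_2^2=6k\epsilon^{(k+1/2)i}mp^{-1/2}$. Taking $t=B_1+B_2$ and using $2\exp(-2t^2/(\sigma_1^2+\sigma_2^2))\le2\exp(-t^2/\sigma_1^2)+2\exp(-t^2/\sigma_2^2)$, I would check $t^2/\sigma_2^2\ge B_2^2/\sigma_2^2\ge b_k\lambda^2$ (using $\epsilon^{i/2}p^{-1/2}\ge1$, which holds as $p=\epsilon^I$ and $i\le I$) and $t^2/\sigma_1^2\ge B_1^2/\sigma_1^2\ge b_k m/(p^{k-1}\Delta^2n\ln n)$ (using $\epsilon^i\ge p$ and $\epsilon^{2k}\ln n\ge b_k$ for $n\ge n_0$). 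This yields the remaining two terms of $\gamma_1$.

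For the second item the first task is the conditional mean. Writing $\deg_{i+1}(v)^2=\sum_{e,e'\ni v}\one[e\cup e'\subseteq V_{i+1}]$ and summing, one gets $\expec(S_{i+1}\mid V_i)=\epsilon^{2k-1}\sum_v\deg_i(v)^2$ plus a diagonal contribution of order $\epsilon^k\sum_v\deg_i(v)$ and correction terms over pairs $e,e'$ sharing at least two vertices; the latter are governed by the codegrees and are kept lower-order by hypothesis~(iv). Since $\epsilon^{2k-1}\le\epsilon^{k+1/2}$ for $k\ge3$, multiplying the bound of~(ii) by $\epsilon^{2k-1}$ keeps the second summand below its level-$(i+1)$ target, while for the first summand the slack $3\ln^{-2}n\cdot\epsilon^{(2k-1)(i+1)}\Delta^2n$ opened by passing from $3i$ to $3(i+1)$ absorbs both the codegree corrections and the fluctuation. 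Concentration of $S_{i+1}$ is again obtained from McDiarmid on $(\alpha_v)_{v\in V_i}$.

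The main obstacle is bounding the Lipschitz constants of $S_{i+1}$. Removing $v$ from $V_{i+1}$ drops $\deg_{i+1}(v)^2$ to $0$ and decreases $\deg_{i+1}(w)^2$ for each $w$ sharing a surviving edge with $v$; the total change is at most $a_v=\deg_{i+1}(v)^2+2\sum_w\deg_{i+1}(w)\codeg_{i+1}(v,w)$, and here the level-$(i+1)$ codegree bound $\codeg_{i+1}(v,w)\le\deg_{i+1}(w)\ln^{-3}n$ from the conditioned~(P4) event is precisely what makes the cross terms a lower-order correction to $\deg_{i+1}(v)^2$. I would then bound $\sum_v a_v^2\le(\max_v a_v)\sum_v a_v$, estimate $\max_v a_v$ through $(\max\{2\epsilon^{(k-1)(i+1)}\Delta,\Gamma\})^2$ and $\sum_v a_v$ through $S_{i+1}$; the two branches of this maximum produce the two summands of $\gamma_2$, namely $2\exp(-b_kpn/\ln^5n)$ from the $\Delta$-branch and $2\exp(-b_kp^km/(\Gamma^2\ln^6n))$ from the $\Gamma$-branch, the surplus powers of $\ln n$ coming from the $\ln^{-2}n$ fluctuation budget and the $\ln^{-3}n$ codegree savings. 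Carrying out this degree/codegree bookkeeping carefully -- making sure every correction term really lands below the slack provided by the $3i\mapsto3(i+1)$ increment -- is the crux of the argument; the two McDiarmid applications themselves are routine once the constants $a_v$ are in hand.
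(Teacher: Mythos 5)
Your treatment of the first consequence is sound and essentially the paper's argument: condition on $V_i$, note $\expec(X_{i+1}\mid V_i)=\epsilon^k X_i$, take Lipschitz constants $\deg_i(v)$, and control $\sum_v\deg_i(v)^2$ by precondition~(ii); splitting the tail bound into the two terms of $\gamma_1$ via $2\exp(-2t^2/(\sigma_1^2+\sigma_2^2))\le 2\exp(-t^2/\sigma_1^2)+2\exp(-t^2/\sigma_2^2)$ is a harmless variant of the paper's case analysis on whether $p^{1/2}\epsilon^{(k-3/2)i}\Delta^2 n\ln n\ge m$. Your outline of the conditional mean of $S_{i+1}$ also matches the paper (the pairs of edges sharing at least two vertices are controlled by precondition~(iv)).

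The gap is in the concentration step for $S_{i+1}$. You take as Lipschitz constants $a_v=\deg_{i+1}(v)^2+2\sum_w\deg_{i+1}(w)\codeg_{i+1}(v,w)$, i.e.\ quantities that are themselves functions of the random bits $(\alpha_v)$ being exposed. McDiarmid's inequality requires \emph{deterministic} bounds valid for every pair of input vectors differing in one coordinate; conditioning on the high-probability (P4) event at level $i+1$ does not repair this (one would need a ``typical bounded differences'' inequality, a genuinely different tool than the one the paper uses). Three further problems compound this: (a) the level-$(i+1)$ codegree bound from (P4) is only available when the density hypothesis $p^{1/2}q^{k-3/2}\Delta^2n\ln n\ge m$ holds, so your cross-term control has no content in the other branch; (b) the failure probability $e^{-b\lambda^2}$ of that (P4) event has nowhere to go, since $\gamma_2$ (the budget for the second consequence) contains no such term; and (c) the cross terms are not in fact a lower-order correction to $\deg_{i+1}(v)^2$ --- they are of order $\deg(v)$ times the maximum degree, which dominates in general. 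The paper's route avoids all of this by bounding the effect of flipping $v$ purely in terms of level-$i$ quantities, which are deterministic given $V_i$: changing the outcome of $v$ changes $\deg_{i+1}(u)^2$ by at most $2\codeg_i(u,v)\deg_i(u)+\codeg_i(u,v)^2$, whence $a_v\le\deg_i(v)^2+4\sum_u\codeg_i(u,v)\deg_i(u)$; then the \emph{trivial} counting identity $\sum_u\codeg_i(u,v)\le(k-1)\deg_i(v)$ together with precondition~(iii) gives $a_v\le 4k\max\{2\epsilon^{(k-1)i}\Delta,\Gamma\}\deg_i(v)$ in all cases, no codegree smallness needed, and $\sum_v a_v^2$ is then controlled by precondition~(ii). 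You should restructure the Lipschitz analysis along these lines.
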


We prove Theorem~\ref{thm:main}.  We claim that for every integer $0 \le j \le
I$, the following holds: with probability at least $1 - 2 j (\gamma_1 +
\gamma_2)$, the four preconditions~(i)~through~(iv) in Lemma~\ref{lemma} hold
for $i=j$.  The proof of this claim is by induction. It is easy to verify
that~(i)~through~(iv) hold for $i=0$ with probability $1$ (here we use
property~(P3)) and so the claim holds for $j=0$.  Let $0 \le j < I$ be an
integer and assume that the claim holds for~$j$. By the induction hypothesis
and Lemma~\ref{lemma} we have that~(i)~and~(ii) hold for $i = j + 1$ with
probability at least $1 - 2 j (\gamma_1 + \gamma_2) - \gamma_1 - \gamma_2$.
From~(P4) we have that~(iii)~and~(iv) hold for $i = j + 1$ with probability at
least $1 - \gamma_1$.  Therefore, as needed, we can conclude
that~(i)~through~(iv) hold for $i = j + 1$ with probability at least $1 - 2 j
(\gamma_1 + \gamma_2) - 2\gamma_1 - \gamma_2 \ge 1 - 2 (j+1) (\gamma_1 +
\gamma_2)$.

By the above claim and since $I \le \ln n$, we have that with probability at
least $1 - 2 ( \gamma_1 + \gamma_2) \ln n$,
$$X_I \in \epsilon^{kI}m \pm \Big(I(p^km)^{-1/2} \epsilon^{kI}m + \lambda
(\epsilon^{(k+1)I} m p^{-1})^{1/2}\Big) \subseteq \expec(X) \pm (\ln n +
\lambda)  \expec(X)^{1/2},$$
where the last containment follows since $\epsilon^I = p$, $\expec(X) = p^k m$
and $I \le \ln n$. This gives the theorem.

\section{Proof of Lemma~\ref{lemma}} \label{sec:3}
Let $0 \le i < I$ be an integer. Assume that $(H, p, \lambda, \Gamma, b)$ is
nice and that we are given $H_i$ so that the preconditions~(i) through~(iv) in
Lemma~\ref{lemma} hold.  We prove below that the first consequence in
Lemma~\ref{lemma} holds with probability at least $1 - \gamma_1$ and that the
second consequence holds with probability at least $1 - \gamma_2$.  
Let $b_k$ be a sufficiently small constant that depends only on $k$, chosen so
as to satisfy our inequalities below.
For future reference we record the following useful inequality, which may or
may not be valid (depending on~$i$):
\begin{eqnarray} \label{eq:iq} 
p^{1/2} \epsilon^{(k-3/2)i} \Delta^2 n \ln n \ge m.
\end{eqnarray}

\subsection{First consequence}
We have $\expec(X_{i+1}) = \epsilon^k X_i$. Thus, using precondition~(i)
and~(P1) (specifically the fact that $k \ge 3$),
\begin{eqnarray*}
\expec(X_{i+1}) &\in& \epsilon^{k(i+1)} m \pm \Big(i (p^km)^{-1/2}
\epsilon^{k(i+1)} m + \epsilon^k \lambda (\epsilon^{(k+1)i} m p^{-1})^{1/2}
\Big) \\
&\subseteq& \epsilon^{k(i+1)} m \pm \Big(i (p^km)^{-1/2} \epsilon^{k(i+1)} m +
\epsilon \lambda (\epsilon^{(k+1)(i+1)} m p^{-1})^{1/2} \Big).
\end{eqnarray*}
It remains to upper bound the probability that $X_{i+1}$ deviates from its
expectation by more than 
$$t_1 := (p^km)^{-1/2}\epsilon^{k(i+1)}m + (1 - \epsilon) \lambda
(\epsilon^{(k+1)(i+1)}m p^{-1})^{1/2}.$$
Every vertex $v \in V_i$ has an outcome which is either the event that $v \in
V_{i+1}$ or not.  Clearly $X_{i+1}$ depends on the outcomes of the vertices in
$V_i$ and changing the outcome of a single vertex $v \in V_i$ can change
$X_{i+1}$ by at most an additive factor of $\deg_i(v)$. Using McDiarmid's
inequality, the fact that $\sum_{v \in V_i} \deg_i(v)^2 = \sum_{v \in V}
\deg_i(v)^2$, precondition~(ii) and the fact that $i < I \le \ln n$, we get
\begin{eqnarray} \label{eq:q1}
\prob{|X_{i+1} - \expec(X_{i+1})| > t_1} &\le& 2 \exp\bigg(-\frac{t_1^2} { 6 k
\epsilon^{(2k-1)i}\Delta^2n + 6 k \epsilon^{(k+1/2)i} m p^{-1/2} }\bigg).
\end{eqnarray}
Suppose that~(\ref{eq:iq}) holds.  Then the denominator of the exponent
in~(\ref{eq:q1}) is at most $12 k \epsilon^{(2k-1)i} \Delta^2 n \ln n$.  In
addition we have $t_1 \ge (p^km)^{-1/2} \epsilon^{k(i+1)}m$. Thus,
from~(\ref{eq:q1}) we get
\begin{eqnarray} \nonumber \prob{|X_{i+1} - \expec(X_{i+1})| > t_1} & \le & 2
\exp\bigg(-\frac{ (p^km)^{-1} \epsilon^{2k(i+1)}m^2 }{ 12 k \epsilon^{(2k-1)i}
\Delta^2 n \ln n}\bigg) \\ 
\nonumber &=& 2 \exp\bigg(-\frac{ \epsilon^{i + 2k} m }{ 12 k p^k \Delta^2 n
\ln n}\bigg) \\ 
\label{eq:g1}  &\le& 2 \exp\bigg(- \frac{b_k m}{ p^{k-1}
\Delta^2 n \ln n}\bigg), \end{eqnarray}
where the last inequality follows since $p < \epsilon^i$.

Now suppose that~(\ref{eq:iq}) does not hold.  Then the denominator of the
exponent in~(\ref{eq:q1}) is at most $12 k \epsilon^{(k+1/2)i} m p^{-1/2}$.  We
also have that $t_1 \ge 0.5 \lambda (\epsilon^{(k+1)(i+1)} m p^{-1})^{1/2}$ and
$p < \epsilon^i$.  Therefore, using~(\ref{eq:q1}) we get
\begin{eqnarray} \label{eq:g2} 
\prob{|X_{i+1} - \expec(X_{i+1})| > t_1}  \le 2 \exp\bigg(-\frac{ 0.25
\lambda^2 \epsilon^{(k+1)(i+1)} m p^{-1}} {12 k \epsilon^{(k+1/2)i} m
p^{-1/2}}\bigg) \le 2 e^{-b_k \lambda^2}.
\end{eqnarray}
We conclude from~(\ref{eq:g1})~and~(\ref{eq:g2}) that the first consequence
holds with probability at least $1 - \gamma_1$.

\subsection{Second consequence}
Define
\begin{eqnarray*} 
\eta := \left\{ \begin{array}{ll} 
k \ln^{-3} n & \text{if~(\ref{eq:iq}) holds}, \\ 
1 & \text{otherwise.}  \end{array} \right.
\end{eqnarray*}
Let $Y := \sum_{v \in V}  \deg_{i+1}(v)^2$.  We start by upper bounding
$\expec(Y)$.  For that we need the next fact.
\begin{proposition} \label{claim:1} For all $v \in V$, $\expec(\deg_{i+1}(v)^2)
\le (\epsilon^{2k-1} +  \epsilon^{k+1} \eta) \deg_i(v)^2 + \epsilon^k
\deg_i(v)$.  \end{proposition}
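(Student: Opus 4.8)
The plan is to prove Proposition~\ref{claim:1} by conditioning on $H_i$ and analyzing how the degree of a fixed vertex $v \in V$ evolves when we pass from $V_i$ to $V_{i+1}$. The key observation is that $\deg_{i+1}(v)$ counts the edges of $H$ containing $v$ that are fully contained in $V_{i+1}$. First I would dispose of the case $v \notin V_{i+1}$: here $\deg_{i+1}(v) = 0$ by definition, so such vertices contribute nothing and the stated bound holds trivially. Thus I condition on the events determining $V_{i+1}$ from $V_i$ and focus on the contribution from $v \in V_{i+1}$, which occurs with probability $\epsilon$. Each edge $e \ni v$ that was present in $H_i$ (i.e.\ $e \subseteq V_i$) survives into $H_{i+1}$ precisely when its other $k-1$ vertices all land in $V_{i+1}$, an event of probability $\epsilon^{k-1}$, independent across the non-$v$ vertices but not independent across edges that share vertices.

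Writing $\deg_{i+1}(v) = \sum_{e} \one_e$ where the sum is over edges $e \ni v$ with $e \subseteq V_i$ and $\one_e$ indicates that $e \subseteq V_{i+1}$, I would expand $\expec(\deg_{i+1}(v)^2 \mid v \in V_{i+1})$ as the double sum $\sum_{e,e'} \expec(\one_e \one_{e'})$. The diagonal terms $e = e'$ each contribute $\epsilon^{k-1}$ and there are $\deg_i(v)$ of them, giving the $\epsilon^{k-1}\deg_i(v)$ mass. For off-diagonal pairs $e \neq e'$, the two edges share $v$ and possibly further vertices; if they share exactly $v$ then $\expec(\one_e\one_{e'}) = \epsilon^{2(k-1)}$, while if they share additional vertices the exponent is larger, hence the probability is smaller, so the clean bound is $\epsilon^{2(k-1)}$ in general — except that pairs sharing more vertices are governed by co-degrees and must be handled separately to extract the $\eta$ factor. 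Multiplying the conditional expectation by $\epsilon$ (the probability $v \in V_{i+1}$) converts $\epsilon^{k-1}$ into $\epsilon^k$ and $\epsilon^{2(k-1)}$ into $\epsilon^{2k-1}$, which is exactly where the two leading terms $\epsilon^{2k-1}\deg_i(v)^2$ and $\epsilon^k \deg_i(v)$ come from.

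The main obstacle, and the whole point of the $\eta$ factor, is controlling the off-diagonal pairs $e \neq e'$ that share a vertex $w \neq v$: these are counted by the co-degree $\codeg_i(v,w)$, and naively bounding their number by $\deg_i(v)^2$ already gives the $\epsilon^{2k-1}\deg_i(v)^2$ term, but one must verify that the correction they force is absorbed into the $\epsilon^{k+1}\eta\,\deg_i(v)^2$ term. Here I expect to split according to whether~(\ref{eq:iq}) holds. When it does, precondition~(iv) guarantees $\codeg_i(v,w) \le \deg_i(w)\ln^{-3}n$ for all $w$, which is precisely the smallness that produces the $\eta = k\ln^{-3}n$ saving; the number of pairs of distinct edges through $v$ sharing a common extra vertex is then bounded by summing co-degrees over the at most $k-1$ possible shared positions, yielding a factor proportional to $\deg_i(v)^2 \ln^{-3}n$ with the constant $k$ accounting for the choice of shared coordinate. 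When~(\ref{eq:iq}) fails, no co-degree control is available and one falls back on the trivial $\eta = 1$.

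Finally, I would collect the two leading contributions and the co-degree correction, factoring out the right powers of $\epsilon$ so that the shared-vertex pairs are bounded by $\epsilon^{k+1}\eta\,\deg_i(v)^2$ (note $\epsilon^{k+1} = \epsilon \cdot \epsilon^k$, smaller than $\epsilon^{2k-1}$ for $k \ge 3$, consistent with~(P1)), the distinct-edge pairs sharing only $v$ by $\epsilon^{2k-1}\deg_i(v)^2$, and the diagonal by $\epsilon^k\deg_i(v)$. Summing these three gives the claimed bound $(\epsilon^{2k-1} + \epsilon^{k+1}\eta)\deg_i(v)^2 + \epsilon^k\deg_i(v)$. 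The delicate bookkeeping will be making sure the constant $k$ in $\eta$ correctly dominates the sum over the $k-1$ possible overlap positions and that the off-diagonal pairs sharing extra vertices are not double-counted against the pairs sharing only $v$; I would handle this by partitioning the off-diagonal pairs according to the smallest nonempty overlap beyond $v$.
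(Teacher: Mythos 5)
Your overall architecture is the same as the paper's: factor out the probability $\epsilon$ that $v$ itself survives, expand the second moment of the number of surviving edges through $v$, let the diagonal give $\epsilon^{k}\deg_i(v)$, let the pairs meeting only in $v$ give $\epsilon^{2k-1}\deg_i(v)^2$, and use precondition~(iv) to count the overlapping pairs and produce the $\epsilon^{k+1}\eta\deg_i(v)^2$ term. (The paper phrases this as $\expec(\deg_{i+1}(v)^2)=\epsilon\,\expec(\deg'_{i+1}(v)^2)$ followed by mean-squared plus variance, which is the same computation.) Your count of the overlapping pairs, at most $(k-1)\deg_i(v)\cdot\max_{w}\codeg_i(v,w)\le\eta\deg_i(v)^2$ under~(\ref{eq:iq}) and trivially at most $\deg_i(v)^2$ otherwise, is exactly the paper's step.

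There is, however, a genuine error at the one point where the $\epsilon^{k+1}$ must come from. You assert that for a pair $e\ne e'$ sharing vertices beyond $v$ ``the exponent is larger, hence the probability is smaller, so the clean bound is $\epsilon^{2(k-1)}$ in general,'' and later that $\epsilon^{k+1}$ is ``smaller than $\epsilon^{2k-1}$ for $k\ge 3$.'' Both inequalities run the wrong way. If $e$ and $e'$ share $j\ge 1$ vertices besides $v$, then $|(e\cup e')\setminus\{v\}|=2(k-1)-j<2(k-1)$, so the joint survival probability $\epsilon^{|(e\cup e')\setminus\{v\}|}$ is \emph{larger} than $\epsilon^{2(k-1)}$; for instance with $k=3$, $e=\{v,a,b\}$, $e'=\{v,a,c\}$ it equals $\epsilon^{3}>\epsilon^{4}$. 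This is precisely why the $\eta$ term exists: an overlapping pair can contribute as much as $\epsilon^{k}$ (before the factor $\epsilon$ for $v$), which dwarfs $\epsilon^{2k-2}$ when $k\ge 3$, so one must compensate by showing such pairs are few. If your claimed per-pair bound $\epsilon^{2(k-1)}$ were correct, the co-degree hypothesis and the $\eta$ term would be superfluous. As written you never establish the per-pair bound $\epsilon^{k+1}$ that your final summation uses; what is needed is the elementary observation that $e'\not\subseteq e$ forces $|(e\cup e')\setminus\{v\}|\ge k$. With that fact inserted (and the trivial case taken at $v\notin V_i$ rather than $v\notin V_{i+1}$), your proof closes and coincides with the paper's.
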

\begin{proof}
If $v \notin V_i$ then the proposition holds since in that case we trivially
have $\deg_i(v) = 0$ and $\deg_{i+1}(v) = 0$.
Assume that $v \in V_i$.  Let $\deg'_{i+1}(v)$ be the number of edges $e \in
E_i$ with $v \in e$, such that $e - \{v\} \subseteq V_{i+1}$. Note that
$\deg_{i+1}(v) = \deg'_{i+1}(v) \cdot {\bf 1}[v \in V_{i+1}]$, where ${\bf 1}[v
\in V_{i+1}]$ is the indicator function for the event that $v \in V_{i+1}$.
Since $\deg'_{i+1}(v)$ is independent of ${\bf 1}[v \in V_{i+1}]$ and $\prob{v
\in V_{i+1}} = \epsilon$, we have $\expec(\deg_{i+1}(v)^2) = \epsilon
\expec(\deg'_{i+1}(v)^2)$.

Clearly $\expec(\deg'_{i+1}(v)) = \epsilon^{k-1} \deg_i(v)$. It is also easy to
see that $\var(\deg'_{i+1}(v)) \le \epsilon^{k-1} \deg_i(v) + \epsilon^{k}
\sum_{e, e'} 1$, where the sum $\sum_{e, e'}$ ranges over all pairs of distinct
edges $e, e' \in E_i$ that contain $v$ and share at least $2$ vertices. By
precondition~(iv) we can bound $\sum_{e, e'} 1$ from above by $\eta
\deg_i(v)^2$.  Therefore, 
\begin{eqnarray*} \expec(\deg_{i+1}(v)^2) &=& \epsilon \expec(\deg'_{i+1}(v)^2)
\\ &=& \epsilon \expec(\deg'_{i+1}(v))^2 + \epsilon \var(\deg'_{i+1}(v)) \\ &
\le& \epsilon^{2k-1} \deg_i(v)^2 + \epsilon^{k+1} \eta \deg_i(v)^2 + \epsilon^k
\deg_i(v).  \end{eqnarray*}
\end{proof}

\begin{proposition} \label{claim:2} 
$\expec(Y) \le \epsilon^{(2k-1)(i+1)} \Delta^2 n ( 1 + (3i+2) \ln^{-2} n ) + 5
k \epsilon^{(k+1/2)(i+1)} m p^{-1/2}$.
\end{proposition}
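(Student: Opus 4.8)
The plan is to sum Proposition~\ref{claim:1} over all $v \in V$. Writing $S_2 := \sum_{v \in V}\deg_i(v)^2$ and $S_1 := \sum_{v \in V}\deg_i(v)$, this gives $\expec(Y) \le (\epsilon^{2k-1} + \epsilon^{k+1}\eta)S_2 + \epsilon^k S_1$. Two inputs then feed in. First, precondition~(ii) bounds $S_2$ directly by $\epsilon^{(2k-1)i}\Delta^2 n(1+3i\ln^{-2}n) + 6k\epsilon^{(k+1/2)i}mp^{-1/2}$. Second, since every edge of $H_i$ has exactly $k$ vertices, counting incidences gives $S_1 = kX_i$, so the degree-sum term is controlled through precondition~(i).

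Next I would simplify the $S_1$ contribution. Using precondition~(i) together with (P2) — which yields $i(p^km)^{-1/2} \le 1$ and, after substituting $\lambda \le (p^km)^{1/2}$, shows the $\lambda$-fluctuation term in~(i) is at most $\epsilon^{ki}m$ — one obtains $X_i \le 3\epsilon^{ki}m$. Combined with $p = \epsilon^I$ and $i+1 \le I$ (which give $\epsilon^{k(i+1)}m \le \epsilon^{(k+1/2)(i+1)}mp^{-1/2}$), this yields $\epsilon^k S_1 = k\epsilon^k X_i \le 3k\epsilon^{(k+1/2)(i+1)}mp^{-1/2}$, so the whole $S_1$ contribution lands safely inside the $mp^{-1/2}$ term of the target.

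The crux is controlling $(\epsilon^{2k-1}+\epsilon^{k+1}\eta)S_2$, and here I would split on whether~(\ref{eq:iq}) holds, exactly matching the definition of $\eta$. When~(\ref{eq:iq}) holds, $\eta = k\ln^{-3}n$ is tiny; the product $\epsilon^{2k-1}\cdot(\text{the }\Delta^2n\text{ part of }S_2)$ produces the main summand $\epsilon^{(2k-1)(i+1)}\Delta^2n(1+3i\ln^{-2}n)$, while the cross term $\epsilon^{k+1}\eta\cdot(\text{the }\Delta^2n\text{ part})$ is absorbed into the extra $2\ln^{-2}n$ slack of the target (turning $3i$ into $3i+2$). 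This works because the smallness of $\eta$ beats the constant factor $\epsilon^{2-k}$, using $i \le \ln n$ and $n$ large. The $mp^{-1/2}$ pieces coming from the second part of $S_2$ carry factors $\epsilon^{k-3/2}$ and $\ln^{-3}n$ and are therefore negligible, so together with the $S_1$ contribution they fit under $5k\epsilon^{(k+1/2)(i+1)}mp^{-1/2}$.

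When~(\ref{eq:iq}) fails, $\eta = 1$ and, since $\epsilon^{k+1} > \epsilon^{2k-1}$ for $k \ge 3$, the coefficient is at most $2\epsilon^{k+1}$. Now I would invoke the failure of~(\ref{eq:iq}) itself, namely $\epsilon^{(2k-1)i}\Delta^2 n < mp^{-1/2}\epsilon^{(k+1/2)i}\ln^{-1}n$, to convert the $\Delta^2n$ part of $S_2$ onto the $mp^{-1/2}$ scale; every resulting term then carries a small factor ($\epsilon^{1/2}$ or $\ln^{-1}n$) and, with the $S_1$ contribution, fits under $5k\epsilon^{(k+1/2)(i+1)}mp^{-1/2}$, the nonnegative $\Delta^2n$ term of the target being free. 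I expect the cross term $\epsilon^{k+1}\eta S_2$ to be the main obstacle: because $\epsilon^{k+1}$ dominates the leading coefficient $\epsilon^{2k-1}$, it is only the smallness of $\eta$ (when~(\ref{eq:iq}) holds) or the trade against the $mp^{-1/2}$ scale afforded by the failure of~(\ref{eq:iq}) that keeps this term within budget.
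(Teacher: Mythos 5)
Your proposal is correct and follows essentially the same route as the paper: sum Proposition~\ref{claim:1} over $V$, bound $\sum_v\deg_i(v)^2$ by precondition~(ii), bound $\sum_v\deg_i(v)=kX_i\le 3k\epsilon^{ki}m$ via precondition~(i) and~(P2) and push it onto the $mp^{-1/2}$ scale using $p\le\epsilon^{i+1}$, then handle the $\epsilon^{k+1}\eta$ cross term by the same case split on~(\ref{eq:iq}) (smallness of $\eta$ absorbed into the $2\ln^{-2}n$ slack in one case, conversion to the $mp^{-1/2}$ scale via the failure of~(\ref{eq:iq}) in the other). The only cosmetic difference is that when~(\ref{eq:iq}) fails you convert the entire $(\epsilon^{2k-1}+\epsilon^{k+1})\sum_v\deg_i(v)^2$ onto the $mp^{-1/2}$ scale rather than only the $\eta$ part, which still fits within the $5k$ budget.
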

\begin{proof}
By Proposition~\ref{claim:1},
\begin{eqnarray*}
\expec(Y) = \sum_{v \in V} \expec(\deg_{i+1}(v)^2) \le \sum_{v \in V}
(\epsilon^{2k-1} + \epsilon^{k+1} \eta) \deg_i(v)^2 + \epsilon^{k} \deg_i(v).
\end{eqnarray*}
By precondition~(ii) and~(P1) (specifically the fact that $k \ge 3$) and since
$6 \epsilon^{3/2} \le 1$,
\begin{eqnarray*} 
\sum_{v \in V} \epsilon^{2k-1} \deg_i(v)^2 &\le& \epsilon^{(2k-1)(i+1)}
\Delta^2 n (1 + 3i \ln^{-2} n ) + 6 k \epsilon^{(k+1/2)i + 2k - 1} m p^{-1/2}
\\ &\le& \epsilon^{(2k-1)(i+1)} \Delta^2 n (1 + 3i \ln^{-2} n ) + k
\epsilon^{(k+1/2)(i+1)} m p^{-1/2}.
\end{eqnarray*}
Note that every edge in $E_i$ is counted exactly $k$ times in the sum $\sum_{v
\in V} \deg_i(v)$ and so $\sum_{v \in V} \deg_i(v) = k X_i$. Moreover,
precondition~(i),~(P2) and the facts that $p  <  \epsilon^i$ and $i \le \ln n$
give us that $X_i \le 3 \epsilon^{ki} m$.  Hence $\sum_{v \in V} \deg_i(v) \le
3 k \epsilon^{ki}m$.  Thus, since $1 \le \epsilon^{i+1} p^{-1}$,
\begin{eqnarray*} 
\sum_{v \in V} \epsilon^k \deg_i(v) \le 3 k \epsilon^{k(i+1)}m \le 3 k
\epsilon^{(k+1/2)(i+1)} m p^{-1/2}. 
\end{eqnarray*}
Given the above, in order to complete the proof it is enough to show that
\begin{eqnarray} \label{eq:todo} 
\sum_{v \in V} \epsilon^{k+1} \eta \deg_i(v)^2 &\le& 2\epsilon^{(2k-1)(i+1)}
\Delta^2n \ln^{-2} n + k \epsilon^{(k+1/2)(i+1)} m p^{-1/2}.
\end{eqnarray}

Suppose that~(\ref{eq:iq}) holds.  In that case $\eta = k \ln^{-3} n$. Hence,
since $\epsilon$ is constant by definition and since by~(P1) we have that $k$
is constant and $n$ is sufficiently large, we have that $\epsilon^{k+1} \eta
\le \epsilon^{2k-1} \ln^{-2} n$. We also have that $6\epsilon^{(k+1/2)i + 2k-1}
\ln^{-2} n \le \epsilon^{(k+1/2)(i+1)}$.  Using precondition~(ii) we thus get
that
\begin{eqnarray} 
\nonumber \sum_{v \in V} \epsilon^{k+1} \eta \deg_i(v)^2 & \le& \sum_{v \in V}
\epsilon^{2k-1} \deg_i(v)^2 \ln^{-2} n \\ 
\label{eq:h1} &\le& 2\epsilon^{(2k-1)(i+1)} \Delta^2n \ln^{-2} n +  k
\epsilon^{(k+1/2)(i+1)} m p^{-1/2}.
\end{eqnarray}
Next suppose that~(\ref{eq:iq}) doesn't hold.  Then $2\epsilon^{(2k-1)i + k+1}
\Delta^2 n \le 0.5 k \epsilon^{(k+1/2)(i+1)} m p^{-1/2}$ and $\eta = 1$.
Therefore, using precondition~(ii) and since $6\epsilon^{1/2} \le 0.5$, we get
\begin{eqnarray} 
\nonumber \sum_{v \in V} \epsilon^{k+1} \eta \deg_i(v)^2 &\le& 0.5 k
\epsilon^{(k+1/2)(i+1)} m p^{-1/2} + 6 k \epsilon^{(k+1/2)i + k+1} m p^{-1/2}
\\ 
\label{eq:h2} &\le& k \epsilon^{(k+1/2)(i+1)} m p^{-1/2}.
\end{eqnarray}
We conclude that~(\ref{eq:todo}) is valid since
either~(\ref{eq:h1})~or~(\ref{eq:h2}) hold.
\end{proof}

In view of Proposition~\ref{claim:2}, it remains to upper bound the probability
that $Y$ deviates from its expectation by more than 
\begin{eqnarray*}
t_2 := \epsilon^{(2k-1)(i + 1)} \Delta^2 n \ln^{-2} n + k
\epsilon^{(k+1/2)(i+1)} m p^{-1/2}.
\end{eqnarray*}

Recall that the outcome of a vertex $v \in V_i$ is either the event that $v \in
V_{i+1}$ or not.  Clearly $Y$ depends on the outcomes of the vertices in $V_i$.
Let $a_v$ be the minimal integer so that if we change the outcome of $v \in
V_i$ then we can change $Y$ by at most an additive factor of $a_v$.
\begin{proposition} \label{claim:3} 
For all $v \in V_i$, $a_v \le 4 k \cdot \max\{2\epsilon^{(k-1)i}\Delta,
\Gamma\} \cdot \deg_i(v)$.
\end{proposition}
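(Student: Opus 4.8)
The plan is to compare the two outcomes of $v$ directly: write $Y = \sum_{w \in V}\deg_{i+1}(w)^2$ and bound how much $Y$ can differ between the configuration in which $v \in V_{i+1}$ and the one in which $v \notin V_{i+1}$, with every other vertex's outcome held fixed. The guiding observation is that flipping this single outcome only toggles the membership in $E_{i+1}$ of edges that contain $v$, so the only squared-degree terms of $Y$ that can move are the one belonging to $v$ and the ones belonging to the $k-1$ further endpoints of those edges.

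To make this precise I would let $S$ be the set of edges $e \in E_i$ with $v \in e$ and $e \setminus \{v\} \subseteq V_{i+1}$; these are exactly the edges whose presence in $E_{i+1}$ depends on the outcome of $v$, and $|S| = \deg'_{i+1}(v) \le \deg_i(v)$. Writing $M := \max\{2\epsilon^{(k-1)i}\Delta, \Gamma\}$, precondition~(iii) gives $\deg_i(w) \le M$ for every $w \in V_i$, and since $E_{i+1} \subseteq E_i$ this also bounds $\deg_{i+1}(w) \le M$. I would then split the change in $Y$ into two parts. The term for $v$ itself changes by $|S|^2$, which is at most $\deg_i(v)\cdot M$ because $|S| \le \deg_i(v) \le M$. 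For each other vertex $w$, letting $d_w$ be the number of edges of $S$ through $w$, its term changes by at most $2\deg_{i+1}(w)\,d_w \le 2M d_w$; summing and using that every edge of $S$ has exactly $k-1$ endpoints other than $v$ gives $\sum_{w} d_w = (k-1)|S| \le (k-1)\deg_i(v)$.

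Adding the two contributions yields a bound of $(1 + 2(k-1))\,M\deg_i(v) = (2k-1)\,M\deg_i(v)$, which is at most $4k\,M\deg_i(v)$, as claimed. Every step here is elementary counting; the only places that require care are the double-counting identity $\sum_{w} d_w = (k-1)|S|$ --- precisely where the linear dependence on $k$ (and hence the constant $4k$) originates --- and the invocation of precondition~(iii) to replace each affected degree $\deg_{i+1}(w)$ by the uniform bound $M$. I would not expect a genuine obstacle beyond keeping the bookkeeping consistent: since removing $v$ can only decrease each affected degree, all the individual changes have the same sign and therefore add rather than cancel, so the crude term-by-term bound already suffices.
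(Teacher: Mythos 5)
Your proof is correct and follows essentially the same route as the paper: isolate the change in the $v$-term plus the terms of vertices sharing an edge with $v$, bound each affected degree by $\max\{2\epsilon^{(k-1)i}\Delta,\Gamma\}$ via precondition~(iii), and use the double-counting identity contributing the factor $k-1$. The only (harmless) difference is that you track the realized change $d_w$ where the paper uses the worst-case bound $\codeg_i(w,v)$ and the expansion $(\deg_i(u)+\codeg_i(u,v))^2-\deg_i(u)^2$, so you end up with the slightly sharper constant $2k-1$ in place of the paper's $4k-3$; both are absorbed into $4k$.
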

\begin{proof}
Let $v \in V_i$.  If $v \notin V_{i+1}$ then $\deg_{i+1}(v) = 0$ and otherwise
$\deg_{i+1}(v) \le \deg_i(v)$. Hence, changing the outcome of~$v$ can change
$\deg_{i+1}(v)^2$ by at most an additive factor of $\deg_i(v)^2$. Now let $u
\ne v$ be a vertex such that $\{v, u, \ldots\} \in E_i$.  Changing the outcome
of~$v$ can change $\deg_{i+1}(u)$ by at most an additive factor
of~$\codeg_i(u,v)$.  Since $\deg_{i+1}(u) \le \deg_i(u)$, this implies that
changing the outcome of~$v$ can change $\deg_{i+1}(u)^2$ by at most an additive
factor of $(\deg_i(u)+\codeg_i(u,v))^2 - \deg_i(u)^2 = 2 \codeg_i(u,v)
\deg_i(u) + \codeg_i(u,v)^2$. Lastly note that changing the outcome of $v$ can
affect only the sum $\deg_{i+1}(v)^2 + \sum_u \deg_{i+1}(u)^2$, where $\sum_u$
ranges over all $u \ne v$ such that $\{v, u, \ldots \} \in E_i$.  From the
above discussion we get that
\begin{eqnarray}
\nonumber a_v &\le& \deg_i(v)^2 + 2 \sum_{u} \codeg_i(u,v) \deg_i(u) +
\codeg_i(u,v)^2 \\ 
\label{eq:j1} &\le& \deg_i(v)^2 + 4 \sum_{u} \codeg_i(u,v) \deg_i(u). 
\end{eqnarray}
The proposition now follows from~(\ref{eq:j1}), precondition~(iii) and the fact
that
$$\sum_{u} \codeg_i(u,v) \le (k-1)\deg_i(v).$$
\end{proof}
From McDiarmid's inequality and Proposition~\ref{claim:3},
\begin{eqnarray} \nonumber \prob{|Y - \expec(Y)| > t_2} &\le& 2
\exp\bigg(-\frac{t_2^2}{\sum_{v \in V_i} a_v^2 }\bigg) \\ \label{eq:q2} &\le& 2
\exp\bigg(-\frac{t_2^2}{16 k^2 \cdot \max\{ 4 \epsilon^{2 (k-1)i} \Delta^2,
\Gamma^2\} \cdot \sum_{v \in V_i} \deg_i(v)^2 }\bigg).  \end{eqnarray} 
We complete the proof by considering the following four cases.
\begin{itemize}
\item
Assume that~(\ref{eq:iq}) holds and $4 \epsilon^{2(k-1)i} \Delta^2 \ge
\Gamma^2$.  From~(\ref{eq:iq}) and precondition~(ii) we have that $\sum_{v \in
V_i} \deg_i(v)^2$ is at most $12 k \epsilon^{(2k-1)i} \Delta^2 n\ln n$.  Also,
trivially $t_2 \ge \epsilon^{(2k-1)(i+1)} \Delta^2 n \ln^{-2}n$.  Therefore,
from~(\ref{eq:q2}) it follows that
\begin{eqnarray*}
\nonumber \prob{|Y - \expec(Y)| > t_2} &\le& 2
\exp\bigg(-\frac{\epsilon^{2(2k-1)(i+1)} \Delta^4 n^2 \ln^{-4}n} {1000 k^3
\epsilon^{2(k-1)i + (2k-1)i} \Delta^4 n \ln n }\bigg) \\ \nonumber &=& 2
\exp\bigg(-\frac{\epsilon^{i + 4k-2} n} {1000 k^3 \ln^5 n }\bigg) \\
\label{eq:l1} &\le& 2 \exp\bigg(-\frac{b_k p n} {\ln^5 n }\bigg),
\end{eqnarray*}
where the last inequality follows since $p  <  \epsilon^i$.
\item
Assume that~(\ref{eq:iq}) holds and $4 \epsilon^{2(k-1)i} \Delta^2 < \Gamma^2$.
We apply the same upper bound on $\sum_{v \in V_i} \deg_i(v)^2$ and the same
lower bound on $t_2$ given in the previous item together with~(\ref{eq:q2}) to
get
\begin{eqnarray*}
\nonumber \prob{|Y - \expec(Y)| > t_2} &\le& 2
\exp\bigg(-\frac{\epsilon^{2(2k-1)(i+1)} \Delta^4 n^2 \ln^{-4}n} {1000 k^3
\Gamma^2 \epsilon^{(2k-1)i} \Delta^2 n \ln n }\bigg) \\ \nonumber &=& 2
\exp\bigg(-\frac{\epsilon^{(2k-1)i + 4k-2} \Delta^2 n } {1000 k^3 \Gamma^2
\ln^5 n }\bigg) \\ 
 \label{eq:l2} &\le& 2 \exp\bigg(-\frac{b_k p^k m} { \Gamma^2 \ln^6 n }\bigg),
\end{eqnarray*}
where the last inequality follows from the assumption that~(\ref{eq:iq}) holds
and since $p  <  \epsilon^i$.
\item
Assume that~(\ref{eq:iq}) doesn't hold and $4 \epsilon^{2(k-1)i} \Delta^2 \ge
\Gamma^2$.  This gives us using precondition~(ii) that $\sum_{v \in V_i}
\deg_i(v)^2 \le 12 k \epsilon^{(k+1/2)i} m p^{-1/2}$.  Since $t_2 \ge k
\epsilon^{(k+1/2)(i+1)} m p^{-1/2} \ge k \epsilon^{k(i+1)} m$, we thus get
from~(\ref{eq:q2}) that
\begin{eqnarray*} 
\nonumber \prob{|Y - \expec(Y)| > t_2} &\le& 2 \exp\bigg(- \frac{k^2
\epsilon^{2k(i+1)} m^2}{1000 k^3 \epsilon^{2(k-1)i + (k+1/2)i} \Delta^2 m
p^{-1/2})}\bigg) \\
 \nonumber &=& 2 \exp\bigg(- \frac{m}{1000 k \epsilon^{(k-3/2)i - 2k} \Delta^2
p^{-1/2} }\bigg) \\
 \label{eq:l3} &\le& 2 \exp\Big(- b_k p n \Big), \end{eqnarray*}
where the last inequality follows from the assumption that~(\ref{eq:iq})
doesn't hold.
\item
Assume that~(\ref{eq:iq}) doesn't hold and $4 \epsilon^{2(k-1)i} \Delta^2 <
\Gamma^2$. The same bounds on $\sum_{v \in V_i} \deg_i(v)^2$ and on $t_2$ as in
the previous item hold.  Hence, from~(\ref{eq:q2}) it follows that
\begin{eqnarray*} \nonumber \prob{|Y - \expec(Y)| > t_2} &\le& 2 \exp\bigg(-
\frac{k^2 \epsilon^{2k(i+1)} m^2}{1000 k^3 \Gamma^2 \epsilon^{(k+1/2)i} m
p^{-1/2})}\bigg) \\
 \nonumber &=& 2 \exp\bigg(- \frac{\epsilon^{(k-1/2)i + 2k} m}{1000 k \Gamma^2
p^{-1/2} }\bigg) \\
 \label{eq:l4} &\le& 2 \exp\bigg(- \frac{b_k p^k m}{\Gamma^2}\bigg),
\end{eqnarray*}
where the last inequality follows since $p  <  \epsilon^i$. 
\end{itemize}
We conclude from the above that the second consequence holds with probability
at least $1 - \gamma_2$.
%

\section{Proof of Theorem~\ref{thm:sub}} \label{sec:4}
%
Let $G$ be a fixed complete or complete bipartite graph, with $v_G$ vertices
and $e_G \ge 3$ edges.  Assume that $H = H_G$, where $H_G$ is the hypergraph
that was defined in the introduction. Note that $n = \binom{N}{2}$, $m =
\Theta_G(n^{v_G})$, $\Delta = \delta = \Theta_G(n^{v_G-2})$, $\Delta_2 =
\Theta_G(n^{v_G-3})$ and $k = e_G$.  Let $X_G = X$.  Recall that $\rho_1 =
v_G/e_G$ and $\rho_2 = (v_G-2)/(e_G-1)$.  Fix a positive constant $c_1$ and let
$c_2 := 0.1 c_1 / (e_G + c_1)$.   Assume that $N^{-\rho_1 + c_1} \le p \le
N^{-\rho_2 - c_1}$, $8 \ln N \le \lambda \le N^{c_2}$ and let $\Gamma :=
N^{c_1}$.  We prove that~(\ref{eq:q}) holds.  For that, it is safe to
assume that $n \ge n_0$ for a sufficiently large constant $n_0$.

The next lemma is proved below.
\begin{lemma} \label{lemma:nice} 
$(H, p, 0.25 \lambda, \Gamma, 0.5 b)$ is nice for some positive constant $b$.  \end{lemma}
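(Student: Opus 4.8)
The plan is to verify each of the defining properties (P1)--(P4) of Definition~\ref{def:nice} for the tuple $(H,p,0.25\lambda,\Gamma,0.5b)$. Properties~(P1)--(P3) are deterministic and follow by substituting the stated orders of magnitude together with the ranges of $p$, $\lambda$ and $\Gamma$; all the genuine work lies in the probabilistic property~(P4). Since the lemma asserts only the existence of a positive constant $b$, and $b$ enters the definition only through~(P4), it suffices to bound the relevant failure probability by $e^{-c\lambda^2}$ for some positive constant $c$ and then set $b:=32c$, so that $e^{-c\lambda^2}\le e^{-(0.5b)(0.25\lambda)^2}$, which is exactly the bound~(P4) demands for this tuple.

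For~(P1) I note $p\le N^{-\rho_2-c_1}\le N^{-c_1}\le 10^{-3}$ for large $N$, that $k=e_G\ge 3$ is constant, and that $n\ge n_0$ is assumed. For~(P2) I compute $p^km=p^{e_G}m$, which is increasing in $p$ and hence minimized at $p=N^{-\rho_1+c_1}$; using $\rho_1 e_G=v_G$ and the stated magnitude of $m$ this is a fixed positive power of $N$, and it dominates both $\ln n$ and $0.25\lambda\le 0.25N^{c_2}$ because $c_2=0.1c_1/(e_G+c_1)$ is a much smaller exponent. For~(P3) the stated magnitudes give $\Delta_2/\delta=\Theta_G(1/n)$, which is far below $\ln^{-3}n$. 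These three checks are immediate.

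The crux is~(P4), which I would prove for an arbitrary fixed $q\in[p,1)$ and then invoke once per iteration. Both bullets concern the variables $\deg_q(v)$ and $\codeg_q(u,v)$, each a sum of indicators $\mathbf{1}[c\setminus\{v\}\subseteq V_q]$ over the copies $c$ of $G$ through a fixed edge (resp. fixed pair of edges), with means $q^{k-1}\deg(v)\le q^{k-1}\Delta$ and $q^{k-2}\codeg(u,v)\le q^{k-2}\Delta_2$ respectively. For the first bullet I would bound, for each of the $n$ vertices $v$, the probability that $\deg_q(v)$ exceeds $\max\{2q^{k-1}\Delta,\Gamma\}$ by a one-sided Chernoff-type estimate: in the regime $2q^{k-1}\Delta\ge\Gamma$ the failure probability is $e^{-\Omega(q^{k-1}\Delta)}\le e^{-\Omega(\Gamma)}$, and in the regime $2q^{k-1}\Delta<\Gamma$ exceeding $\Gamma$ is a large deviation of probability again at most $e^{-\Omega(\Gamma)}$. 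Since $\Gamma=N^{c_1}$ and $2c_2<c_1$, a union bound over the $n$ vertices leaves $n\,e^{-\Omega(\Gamma)}\le e^{-\Omega(N^{c_1})}\le e^{-c\lambda^2}$. The subtlety here, and the main obstacle of the whole proof, is that the copies of $G$ through a common edge are not independent, as they may share further edges. I would control this either by a concentration inequality for low-degree polynomials of independent Bernoulli variables, or by using that pairwise overlaps are governed by $\Delta_2\ll\delta$ (property~(P3)), so that the count splits into nearly disjoint blocks amenable to a Chernoff/Janson-type bound; this is where the complete or complete bipartite structure of $G$ is exploited.

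For the second bullet I would work under its premise $p^{1/2}q^{k-3/2}\Delta^2 n\ln n\ge m$, which forces $q$ to be substantially larger than $p$ (at $q=p$ the premise fails by a fixed positive power of $N$, thanks to the $c_1$-slack in $p\le N^{-\rho_2-c_1}$). Under this premise I would (a) upper bound $\codeg_q(u,v)$ near $q^{k-2}\Delta_2$ and (b) lower bound $\deg_q(w)\ge\tfrac12 q^{k-1}\delta$ by a lower-tail Chernoff bound, and then union bound over all triples $u,v,w\in V_q$; the premise guarantees that $q^{k-1}\delta$ exceeds a positive power of $N$, so these tails are again at most $e^{-c\lambda^2}$. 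The desired comparison $\codeg_q(u,v)\le\deg_q(w)\ln^{-3}n$ then reduces to $\Delta_2/\delta\le q\ln^{-3}n$, which combines property~(P3) with precisely the lower bound on $q$ supplied by the premise. Translating the premise into this lower bound on $q$ via the stated magnitudes of $m,\Delta,\delta,n$ is the delicate bookkeeping step. Finally, collecting all failure probabilities arising in~(P4), each is at most $e^{-c\lambda^2}$ for a suitable positive constant $c$; setting $b:=32c$ and appending the deterministic properties~(P1)--(P3) completes the verification that $(H,p,0.25\lambda,\Gamma,0.5b)$ is nice.
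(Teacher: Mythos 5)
Your checks of (P1)--(P3) coincide with the paper's and are fine. The genuine gap is in (P4), which you rightly call the crux but do not actually prove: you offer two alternative strategies (``a concentration inequality for low-degree polynomials \ldots\ or \ldots\ nearly disjoint blocks'') without carrying out either, and the concrete tail bounds you do assert are not valid for these dependent counts. First, $\deg_q(v)$ is an extension count --- a sum of heavily overlapping indicators --- so in the regime $2q^{k-1}\Delta<\Gamma$ the claim $\prob{\deg_q(v)>\Gamma}\le e^{-\Omega(\Gamma)}$ is exactly the ``infamous upper tail'' problem (cf.\ \cite{JR02}) and is unjustified; exceeding a threshold far above the mean does not cost $e^{-\Omega(\mathrm{threshold})}$ for subgraph-type counts. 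Second, your step (a) for the second bullet, bounding $\codeg_q(u,v)$ ``near $q^{k-2}\Delta_2$,'' cannot work: in the relevant range one computes $q^{k-2}\Delta_2=N^{\rho_2-1+o(1)}=o(1)$, so after a union bound over the $\Theta(n^2)$ pairs many co-degrees will exceed their mean by an unbounded multiplicative factor, and no concentration ``near the mean'' is available.

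The paper's proof of (P4) resolves both issues with machinery you do not supply: it encodes $\deg_q$ and the co-degrees as extension counts $Z_1,Z_2$ of \emph{balanced rooted graphs}, invokes Vu's concentration theorem for such extensions (Corollary 6.7 of \cite{Vu02}, stated as Theorem~\ref{thm:kv}), and --- crucially --- handles the small-expectation regimes by a monotone coupling in $q$: one applies the concentration bound at the threshold value of $q$ where $\expec(Z_1)=0.5\Gamma$ (resp.\ $\expec(Z_2)=N^{0.2c_1}$) and pushes the resulting bound down to smaller $q$ by monotonicity, then compares $Z_2$ not with its own mean but with the \emph{lower} bound $Z_1\ge 0.5\expec(Z_1)\ge 0.5N^{0.3c_1}$. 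The hypothesis that $G$ is complete or complete bipartite enters in verifying balancedness and in the comparison $\expec(Z_2)\le\expec(Z_1)N^{-c_2}$, not in a block decomposition as you suggest. Your first alternative (polynomial concentration) is in the right spirit --- it is essentially the Kim--Vu/Vu route the paper takes --- but as written the proposal leaves the decisive step unexecuted and rests on tail estimates that fail in the stated regimes.
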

From Lemma~\ref{lemma:nice}, Theorem~\ref{thm:main}, the assumptions on $H, p$
and $\lambda$ above and the definition of $\Gamma$, it easily follows that
\begin{eqnarray*} \label{eq:4a} 
\prob{|X_G - \expec(X_G)| > (\ln n + 0.25 \lambda) \expec(X_G)^{1/2}} \le
e^{-c_G \lambda^2},
\end{eqnarray*}
where $c_G$ is a positive constant that depends only on $G$.  In addition we
have that $\ln n \le 2 \ln N \le 0.25 \lambda$ and that $0.5 \expec(X_G)^{1/2}
< \var(X_G)^{1/2}$ for our choice of $p$.  Thus,
\begin{eqnarray*} 
 \prob{|X_G - \expec(X_G)| \ge \lambda \var(X_G)^{1/2} } &\le& \prob{|X_G -
\expec(X_G)| > 0.5 \lambda \expec(X_G)^{1/2} }  \\
&\le& \prob{|X_G - \expec(X_G)| > (\ln n  +0.25 \lambda) \expec(X_G)^{1/2} } \\
&\le& e^{-c_G \lambda^2}. \end{eqnarray*}

All that remains is to prove Lemma~\ref{lemma:nice}. For that we need to show
that the four properties given in Definition~\ref{def:nice} hold.
Property~(P1)~holds since $G$ is a fixed graph with $e_G \ge 3$, $p \le
N^{-\rho_2 - c_1}$ and $n = \binom{N}{2} \ge n_0$.  Property~(P2)~follows from
our assumed lower bound on $p$ and the fact that $k = e_G \ge 3$, which give
$(p^km)^{1/2} \ge N^{c_1} \ge \ln n$, together with the fact that $c_2 \le
c_1$, which gives $\lambda \le N^{c_2} \le N^{c_1}$.  Property~(P3)~holds since
$v_G \ge 3$ and $\delta = \Theta_G(N^{v_G-2})$ while $\Delta_2 =
\Theta_G(N^{v_G-3})$. It remains to show that property~(P4)~holds.  This is
done in the next subsection.

\subsection{Property~(P4)}
If $v_G = 3$ then $G$ is a triangle and in that case, property~(P4) can be
easily shown to hold using Chernoff's bound. For the rest of this section we
assume that $v_G \ge 4$. 

A {rooted graph} $(R,F)$ is a graph $F$ whose vertex set is the union of two
disjoint sets of labeled vertices,  $R$ and $S$, where the vertices in $R$ are
called the {roots} and $|S| > 0$.  In such a rooted graph, let $\{x_1, x_2,
\ldots, x_r\}$ be the labels of $R$ and let $\{y_1, y_2, \ldots, y_s\}$ be the
labels of $S$.
The {density} of such a rooted graph is defined to be the ratio $t/s$, where $t$
is the number of edges in $F$  excluding the edges induced by $R$. We say that
$(R,F)$ is {balanced} if for every induced subgraph $F' \subseteq F$ that
contains the vertex set $R$ and which has at least $r + 1$ vertices, the
density of $(R,F')$ is not larger than the density of $(R, F)$.

Let $(R,F)$ be a rooted graph as above (in particular, $|R| = r$ and $F$ has
$r+s$ vertices). Let $R'$ be a set of $r$ vertices in $K_N$ that are labeled by
$\{x_1', x_2', \ldots, x_r'\}$. Let $S'$ be a set of $s$ vertices disjoint from
$R'$ in $K_N$. Let $F'$ be a subgraph of $K_N$ over the vertex set $R' \cup S'$
with $R'$ being an independent set in $F'$.  We say that $F'$ is an {extension}
with respect to $(R,F)$ and $R'$ if one can label the vertices in $S'$ by
$\{y_1', y_2', \ldots, y_s'\}$ so that (identifying vertices with their
labels):
\begin{itemize}
\item $\{x_i, y_j\} \in F$ if and only if $\{x_i', y_j'\} \in F'$;
\item $\{y_i, y_j\} \in F$ if and only if $\{y_i', y_j'\} \in F'$.
\end{itemize}
Let $Z_{(R,F), R'}$ be the random variable that counts the number of
extensions with respect to a given rooted graph $(R,F)$ and a given set of
labeled vertices $R'$ that are contained in $G(N, q)$. The next result follows
from Corollary~6.7~in~\cite{Vu02}.
\begin{theorem} \label{thm:kv}
There is a positive constant $b$ such that the following holds.  Let $(R,F)$ be
a balanced rooted graph.  If $\expec(Z_{(R,F), R'}) \ge N^{0.2c_1}$ then 
$$\prob{|Z_{(R,F), R'} - \expec(Z_{(R,F), R'})| \ge 0.5 \expec(Z_{(R,F), R'})}
\le e^{-b N^{0.2c_1/e_G}}.$$
\end{theorem}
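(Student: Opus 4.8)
The plan is to realize $Z_{(R,F),R'}$ as a polynomial with nonnegative coefficients in the independent edge-indicator variables of $G(N,q)$ and then to invoke the polynomial concentration inequality of Kim and Vu in the packaged form given by Corollary~6.7 in~\cite{Vu02}. First I would observe that, with the roots $R'$ fixed, each extension is determined by a choice of the $s$ vertices $S'$ together with the requirement that all non-root edges of $F$ (those between $R$ and $S$ and those inside $S$) be present; hence $Z_{(R,F),R'}$ is exactly the sum, over admissible placements of $S'$, of the product of the corresponding edge-indicators. Thus $Z_{(R,F),R'}$ is a polynomial whose degree equals $t$, the number of non-root edges of $F$, and since $(R,F)$ arises as a sub-configuration of $G$ we have $t \le e_G$. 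All coefficients are $0/1$, so the Kim--Vu machinery applies. I would also note that although $Z_{(R,F),R'}$ depends on $q$, the hypothesis is stated in terms of $\expec(Z_{(R,F),R'})$, so the argument is uniform in $q$.

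Next I would translate the balancedness hypothesis into the input required by Corollary~6.7. The relevant auxiliary quantities in the Kim--Vu setup are the maximal expected partial derivatives of $Z_{(R,F),R'}$ with respect to subsets of the edge variables; differentiating with respect to a set of edges corresponds to counting the extensions forced to use those edges, i.e.\ to the expected number of extensions of a rooted subgraph $(R,F')$ with $R \subseteq F' \subseteq F$. The definition of balancedness says precisely that no proper such sub-extension is denser than the full extension, and this is exactly the condition guaranteeing that every one of these partial-derivative expectations is dominated by $\expec(Z_{(R,F),R'})$ up to powers of $N$ that the corollary absorbs. This is the content needed to feed a balanced $(R,F)$ into Corollary~6.7.

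Finally I would apply Corollary~6.7 of~\cite{Vu02} with the two quantitative inputs at hand: that corollary states, for a balanced rooted graph, that once $\expec(Z_{(R,F),R'})$ exceeds a small power $N^{\epsilon}$ of $N$, the variable stays within a $(1 \pm 1/2)$ factor of its mean except with probability $\exp(-\Omega(N^{\epsilon/\deg}))$, where $\deg$ is the degree of the defining polynomial. Taking $\epsilon = 0.2 c_1$ and using $\deg = t \le e_G$, the hypothesis $\expec(Z_{(R,F),R'}) \ge N^{0.2 c_1}$ yields a deviation probability of $\exp(-\Omega(N^{0.2 c_1/e_G}))$, which is at most $e^{-b N^{0.2 c_1/e_G}}$ for a suitable positive constant $b$. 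The main obstacle here is bookkeeping rather than ideas: one must confirm that the precise hypotheses of Corollary~6.7 (balancedness together with the stated lower bound on the mean) are met under our normalization, and that the degree bound $t \le e_G$ is exactly what produces the $1/e_G$ in the exponent. Modulo matching Vu's notation to ours, no further work is required.
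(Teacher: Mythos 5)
Your proposal matches the paper's treatment: the paper offers no proof of this theorem beyond the single remark that it ``follows from Corollary~6.7 in~\cite{Vu02}'', which is exactly the route you take, with your added detail about realizing $Z_{(R,F),R'}$ as a degree-$t$ polynomial in edge indicators and feeding balancedness into the Kim--Vu partial-derivative conditions being a reasonable unpacking of that citation. No discrepancy to report.
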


Label the vertices of $G$ with $\{x_1, x_2, \ldots, x_{v_G}\}$. Identify each
vertex of $G$ with its label and assume without loss of generality that $\{x_1,
x_2\} \in G$.  Let $(R_1, G)$ and $(R_2, G)$ be two rooted graphs, where $R_1 =
\{x_1, x_2\}$ and $R_2 = \{x_1, x_2, x_3\}$.  It is easy to verify that $(R_1,
G)$ and $(R_2, G)$ are balanced.  Let $R_1'$ be a set of two vertices in $K_N$,
labeled with $\{x_1', x_2'\}$.  Let $R_2'$ be a set of three vertices in $K_N$,
labeled with $\{x_1', x_2', x_3'\}$.  Let $Z_1 := Z_{(R_1, G), R_1'}$ and $Z_2
:= Z_{(R_2, G), R_2'}$.  
Observe that conditioned on $e_1 \in V_q$, where $e_1$ is the edge that is
induced by $R_1'$, we have $\deg_q(e_1) = Z_1$. Furthermore, if for every
choice of $R_2'$ and its labelling we have $Z_2 \le z$ then the maximum co-degree of $H_q$ is at
most $2z$. 
The following lemma together with a union bound argument gives property~(P4).
\begin{lemma} \label{lemma:z}
Let $p \le q < 1$.
There is a positive constant $b$ such that with probability at least $1 - 3
e^{-b \lambda^2}$,
\begin{enumerate}
\item $Z_1 \le \max\{2q^{k-1} \Delta, \Gamma\}$;
\item $p^{1/2} q^{k-3/2} \Delta^2 n \ln n \ge m \implies Z_2 \le Z_1 \ln^{-4}
n$.
\end{enumerate}
\end{lemma}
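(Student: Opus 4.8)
The plan is to control $Z_1$ and $Z_2$ entirely through their expectations, invoking Theorem~\ref{thm:kv} whenever the relevant expectation is at least $N^{0.2c_1}$ and a Kim--Vu-type upper-tail estimate (legitimate because $(R_1,G)$ and $(R_2,G)$ are balanced) when it is smaller. First I would record the two means. Since $H$ is regular ($\Delta=\delta$) and, conditioned on the root edge $e_1\in V_q$, one has $\deg_q(e_1)=Z_1$, it follows that $\expec(Z_1)=q^{e_G-1}\Delta=q^{k-1}\Delta$ exactly. Writing $d_2\ge 1$ for the number of edges of $G$ induced by $R_2$, a direct count gives $\expec(Z_2)=\Theta_G(N^{v_G-3})\,q^{e_G-d_2}$, and hence the crucial estimate $\expec(Z_2)/\expec(Z_1)=\Theta_G(N^{-1}q^{d_2-1})=O_G(N^{-1})$, uniformly in $q$.

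For the first item I would split on the size of $\expec(Z_1)=q^{k-1}\Delta$. If $\expec(Z_1)\ge N^{0.2c_1}$, then Theorem~\ref{thm:kv} applied to $(R_1,G)$ yields $Z_1\le 1.5\,\expec(Z_1)=1.5\,q^{k-1}\Delta\le 2q^{k-1}\Delta$ off an event of probability $e^{-bN^{0.2c_1/e_G}}$; since $\lambda\le N^{c_2}$ with $2c_2=0.2c_1/(e_G+c_1)<0.2c_1/e_G$, this is at most $e^{-b'\lambda^2}$. If instead $\expec(Z_1)<N^{0.2c_1}$, then $2q^{k-1}\Delta<N^{c_1}=\Gamma$, so it suffices to show $Z_1\le\Gamma$; as $\Gamma$ exceeds $\expec(Z_1)$ by a polynomial factor, the Kim--Vu upper tail for the balanced $(R_1,G)$ gives $\prob{Z_1\ge\Gamma}\le e^{-\Omega(N^{c_1/e_G})}\le e^{-b'\lambda^2}$.

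For the second item the key point is that its hypothesis $p^{1/2}q^{k-3/2}\Delta^2 n\ln n\ge m$, together with the upper bound $p\le N^{-\rho_2-c_1}$, already forces $\expec(Z_1)\ge N^{0.2c_1}$. Indeed the hypothesis gives $\expec(Z_1)=q^{1/2}\cdot q^{k-3/2}\Delta\ge q^{1/2}\,m/(p^{1/2}\Delta n\ln n)$, and since $m/(\Delta n)=\Theta(1)$ this reads $\expec(Z_1)\ge\Theta(1)(q/p)^{1/2}/\ln N$; a short computation using $\rho_2=(v_G-2)/(e_G-1)$ and the value of $q$ at the threshold shows $q/p\ge N^{\Omega(c_1)}$, whence $\expec(Z_1)\ge N^{0.2c_1}$ for large $N$. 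Theorem~\ref{thm:kv} then gives the lower bound $Z_1\ge 0.5\,\expec(Z_1)$ off an event of probability $e^{-b'\lambda^2}$, so it is enough to prove $Z_2\le T:=0.5\,\expec(Z_1)\ln^{-4}n$. If $\expec(Z_2)\ge N^{0.2c_1}$, Theorem~\ref{thm:kv} yields $Z_2\le 1.5\,\expec(Z_2)=1.5\,O_G(N^{-1})\expec(Z_1)\le T$; otherwise $T$ exceeds $\expec(Z_2)$ by a factor $\Omega(N\ln^{-4}n)$ and $T\ge N^{0.19c_1}$, so the Kim--Vu upper tail for $(R_2,G)$ gives $\prob{Z_2\ge T}\le e^{-\Omega(N^{0.19c_1/e_G})}\le e^{-b'\lambda^2}$. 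In every case $Z_2\le T\le Z_1\ln^{-4}n$.

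Collecting the at most three failure events (one for item~1, one for the lower bound on $Z_1$, one for the upper bound on $Z_2$) and renaming the constant gives probability at least $1-3e^{-b\lambda^2}$. The universal quantification over $R_2'$ and its $O_G(1)$ labelings that property~(P4) requires is supplied by the outer union bound announced after the lemma, which is why only the fixed rooted graphs $(R_1,G),(R_2,G)$ appear here. I expect the main obstacle to be the small-mean regime: checking that the Kim--Vu upper-tail decay genuinely dominates $e^{-b\lambda^2}$. This is precisely where the balancedness of $(R_1,G)$ and $(R_2,G)$ and the calibrated choice $c_2=0.1c_1/(e_G+c_1)$ (which guarantees $2c_2<0.2c_1/e_G\le c_1/e_G$) are used, and the second delicate point is the expectation estimate above that turns the hypothesis of item~2 into the two-sided control of $Z_1$.
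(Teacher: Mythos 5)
Your overall strategy is the paper's (compute the two means, apply Theorem~\ref{thm:kv} to the balanced rooted graphs $(R_1,G)$ and $(R_2,G)$), but your central expectation ratio contains a sign error that propagates. Writing $d_2$ for the number of edges of $G$ induced by $R_2$, one has $\expec(Z_2)/\expec(Z_1)=\Theta_G\big(N^{-1}q^{1-d_2}\big)$, not $\Theta_G\big(N^{-1}q^{d_2-1}\big)$; since $d_2=3$ when $G$ is complete and $d_2=2$ when $G$ is complete bipartite, the ratio is $\Theta_G(N^{-1}q^{-2})$ resp.\ $\Theta_G(N^{-1}q^{-1})$, which is \emph{not} $O_G(N^{-1})$ uniformly in $q$ and in fact tends to infinity as $q$ decreases. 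Consequently your step ``$Z_2\le 1.5\,\expec(Z_2)=1.5\,O_G(N^{-1})\expec(Z_1)\le T$'' is unjustified as written. It can be repaired, but only by extracting from the hypothesis of item~2 an \emph{absolute} lower bound on $q$, namely $q\ge N^{-\rho_2+0.5c_1/e_G}$, and then using $\rho_2\le 1/2$ for complete $G$ and $\rho_2\le 1$ for complete bipartite $G$ to kill the $q^{-2}$ (resp.\ $q^{-1}$) factor --- this is precisely the two-case computation the paper performs to reach $\expec(Z_2)\le\expec(Z_1)N^{-c_2}$. Your bound $q/p\ge N^{\Omega(c_1)}$ controls $q$ only relative to $p$ and does not by itself yield this.

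The second gap is the small-mean regime, which you flag as the main obstacle but do not resolve. Theorem~\ref{thm:kv} gives nothing when the relevant expectation is below $N^{0.2c_1}$, yet you invoke an unstated ``Kim--Vu upper tail'' to claim $\prob{Z_1\ge\Gamma}\le e^{-\Omega(N^{c_1/e_G})}$ and $\prob{Z_2\ge T}\le e^{-\Omega(N^{0.19c_1/e_G})}$ there; balancedness alone does not supply such an inequality, and no such tool is established in the paper. The missing idea is a monotonicity argument: $Z_1$ and $Z_2$ are monotone increasing in $q$ (couple $G(N,q)\subseteq G(N,q')$ for $q\le q'$), so one applies Theorem~\ref{thm:kv} at the threshold value of $q$ where the expectation equals the critical size ($0.5\Gamma$ for $Z_1$, $N^{0.2c_1}$ for $Z_2$), obtains $Z\le 2\expec(Z)$ there with probability $1-e^{-b\lambda^2}$, and transfers the resulting numerical bound ($\Gamma$, resp.\ a quantity below $Z_1\ln^{-4}n$ thanks to $\expec(Z_1)\ge N^{0.3c_1}$) to all smaller $q$. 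Without this, both the $Z_1\le\Gamma$ branch of item~1 and the $\expec(Z_2)\le N^{0.2c_1}$ branch of item~2 remain unproven.
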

\begin{proof}
Let $b$ be the positive constant implied to exist by Theorem~\ref{thm:kv}. We
show that the first item holds with probability at least $1 - e^{-b\lambda^2}$
while the second item holds with probability at least $1 - 2e^{-b\lambda^2}$.

Let $c_0$ be the minimal positive real for which it holds that $q \ge
N^{-\rho_2 + c_0}$ implies $\expec(Z_1) \ge 0.5 \Gamma = 0.5 N^{c_1}$.  If $q
\ge N^{-\rho_2 + c_0}$ then we are done since by Theorem~\ref{thm:kv} and the
fact that $\lambda^2 \le N^{0.2c_1/e_G}$, we have that with probability at
least $1 - e^{-b \lambda^2}$, $Z_1 \le 2 \expec(Z_1) = 2q^{k-1}\Delta$.  Note
that in particular, if $q = N^{-\rho_2 + c_0}$ then with probability at least
$1 - e^{-b \lambda^2}$ we have $Z_1 \le 2 \expec(Z_1) = \Gamma$.  Thus, by a
monotonicity argument we get that if $q < N^{-\rho_2 + c_0}$ then with
probability at least $1 - e^{-b \lambda^2}$, $Z_1 \le \Gamma$.

Assume  $q$ satisfies $p^{1/2} q^{k-3/2} \Delta^2 n \ln n \ge m$.  This implies
that $q \ge N^{-\rho_2 + 0.5 c_1 / e_G}$. From this we infer that $\expec(Z_1)
\ge N^{0.3c_1}$. 
Keeping that in mind, if $q$ is such that $\expec(Z_2) = N^{0.2c_1}$ then it
follows from Theorem~\ref{thm:kv} that with probability at least $1 - 2 e^{- b
\lambda^2}$, $Z_2 \le Z_1 \ln^{-4} n$.  By a monotonicity argument we can reach
the same conclusion for every $q$ for which it holds that $\expec(Z_2) \le
N^{0.2c_1}$.
Next note that $\expec(Z_1) = \Theta_G(N^{v_G-2} q^{e_G-1})$.  If $G$ is a
complete graph then $\expec(Z_2) = \Theta_G(N^{v_G-3}q^{e_G-3})$ and $q \ge
N^{-\rho_2 + 0.5c_1/e_G} \ge N^{-1/2 + 0.5 c_1 / e_G}$, which implies
$\expec(Z_2) \le \expec(Z_1) N^{-c_2}$.
If on the other hand $G$ is a complete bipartite graph then $\expec(Z_2) =
\Theta_G(N^{v_G-3}q^{e_G-2})$ and $q \ge N^{-\rho_2 + 0.5c_1/e_G} \ge N^{-1 +
0.5c_1 / e_G}$, which again implies $\expec(Z_2) \le \expec(Z_1) N^{-c_2}$.
Thus, if $q$ is such that $\expec(Z_2) > N^{0.2c_1}$ then by
Theorem~\ref{thm:kv} we get that with probability at least $1 - 2 e^{-b
\lambda^2}$, $Z_2 \le Z_1 \ln^{-4} n$ as needed.  
\end{proof}

\begin{bibdiv}
\begin{biblist}

\bib{B85}{book}{
   author={Bollob{\'a}s, B{\'e}la},
   title={Random graphs},
   publisher={Academic Press Inc. [Harcourt Brace Jovanovich Publishers]},
   place={London},
   date={1985},
   pages={xvi+447},
   isbn={0-12-111755-3},
   isbn={0-12-111756-1},
}

\bib{JLR}{book}{
   author={Janson, Svante},
   author={{\L}uczak, Tomasz},
   author={Rucinski, Andrzej},
   title={Random graphs},
   series={Wiley-Interscience Series in Discrete Mathematics and
   Optimization},
   publisher={Wiley-Interscience, New York},
   date={2000},
   pages={xii+333},
   isbn={0-471-17541-2},
}

\bib{JR02}{article}{
   author={Janson, Svante},
   author={Ruci{\'n}ski, Andrzej},
   title={The infamous upper tail},
   journal={Random Structures Algorithms},
   volume={20},
   date={2002},
   number={3},
   pages={317--342},
   issn={1042-9832},
}

\bib{K90}{article}{
      author={Kannan, Ravi},
       title={A new probability inequality using typical moments and concentration results},
       date={2009},
     journal={Proceedings of the 50th Annual Symposium on Foundations of Computer Science},
}

\bib{McD}{article}{
   author={McDiarmid, Colin},
   title={On the method of bounded differences},
   conference={
      title={Surveys in combinatorics, 1989},
      address={Norwich},
      date={1989},
   },
   book={
      series={London Math. Soc. Lecture Note Ser.},
      volume={141},
      publisher={Cambridge Univ. Press},
      place={Cambridge},
   },
   date={1989},
   pages={148--188},
}

\bib{Ru88}{article}{
   author={Ruci{\'n}ski, Andrzej},
   title={When are small subgraphs of a random graph normally distributed?},
   journal={Probab. Theory Related Fields},
   volume={78},
   date={1988},
   number={1},
   pages={1--10},
   issn={0178-8051},
}

\bib{Vu01}{article}{
   author={Vu, Van H.},
   title={A large deviation result on the number of small subgraphs of a
   random graph},
   journal={Combin. Probab. Comput.},
   volume={10},
   date={2001},
   number={1},
   pages={79--94},
   issn={0963-5483},
}

\bib{Vu02}{article}{
   author={Vu, V. H.},
   title={Concentration of non-Lipschitz functions and applications},
   journal={Random Structures Algorithms},
   volume={20},
   date={2002},
   number={3},
   pages={262--316},
   issn={1042-9832},
}

\bib{Guy}{article}{
      author={Wolfovitz, Guy},
       title={Sub-Gaussian tails for the number of triangles in G(n,p)},
       date={2009},
     eprint={http://arxiv.org/abs/0909.2403},
}

\end{biblist}
\end{bibdiv}

\end{document}